\documentclass[11pt]{article}

\usepackage{preprint}

\usepackage[
    backend=biber,
    style=alphabetic, 
    sortlocale=de_DE,
    citestyle=numeric-comp,
    natbib=true,
    url=false, 
    doi=true,
    eprint=false
]{biblatex}
\title{Degrees of Fano Quiver Moduli}
\author{Pengcheng Zhang}
\date{11.09.2026}

\begin{document}

\maketitle

\begin{abstract}
    This paper provides methods to compute degrees of Fano quiver moduli and gives explicit degree formulas for two classes of them. One such class is a series of toric Fano varieties that parametrize isomorphism classes of representations of the bipartite quiver \(K_{(2,q)}^m\) with dimension vector \(\underline{1}\), and the other class is the moduli space of \(q\)-point configurations on \(\mathbb{P}^1\). In the former case, we obtain a formula comprised of sums of binomial coefficients in \(q\) and \(m\). Its special case \(m=1\) turns out to be the same as the central MacMahon numbers (OEIS entry A177043). In the latter case, we obtained a formula over \(\mathbb{Q}\). These formulas are much more efficient than computing directly in Chow rings. 
\end{abstract}

\pagenumbering{arabic}

\tableofcontents

\section{Introduction}\label{Section:1}

Geometric Invariant Theory provides powerful tools to construct quotients of varieties by algebraic group actions. One interesting class of these quotients are quiver moduli, which parametrize isomorphism classes of stable representations of quivers with fixed dimension vectors. Algebraic, arithmetic, and geometric properties of quiver moduli are surveyed in \cite{Reineke08mo}. The Chow rings of quiver moduli were studied by King and Walter in \cite{kingwalter1995chow}, where they showed that Chern classes of universal bundles generate the Chow ring. In \cite{franzen2015chow} Franzen proved that they have explicit presentations. To make possible the computation of other invariants of quiver moduli, Belmans and Franzen described in \cite{belmans2024chow} the point class and Todd class for quiver moduli. 

This paper grew out of an idea suggested by my second advisor Markus Reineke. We compute degrees of Fano quiver moduli based on results mentioned above. A sufficient numerical criterion for a quiver moduli to be Fano is given in \cite{frRe2021fano}, as well as a formula for the anti-canonical class. By adapting \cite[Lemma 25]{franzen2015chow}, we provide a method in \zcref{Corollary:toric-red-degree} that helps compute degrees in the toric setting. 

Our main results are degree formulas for two types of Fano quiver moduli. One is the complete bipartite quiver \(\ktqm\) consisting of vertices \(v_1, v_2, w_1, \dots, w_q\) with \(q\geq 1\) being an odd integer and \(m\) arrows from \(v_i\) to \(w_j\), to which we assign the dimension vector \(\underline{1}\). \zcref{Proposition:k2qm-deg} computes the degree of the toric Fano variety \(\modulithetastable{\ktqm}{\underline{1}}\) as an explicit summation of products of binomial coefficients. The case \(m=1\) was studied by Blume and Hille in \cite{BlumeHille21}, where they showed that these quiver moduli recover Losev--Manin moduli spaces, which first appeared in \cite{LosevManin00} as a toric compactification of moduli spaces of \(n\)-pointed curves of genus 0. The degree formula for \(\modulithetastable{\ktqo}{\underline{1}}\) generates an integer sequence that depends on an odd integer \(q\). Computer experiments suggest that this sequence agrees with the central MacMahon numbers (OEIS entry A177043). This is stated as \zcref{Conjecture:deg-macmahon}, and an AI--assisted proof is given in \zcref{Proposition:macmahon-conj}. 

Another class of quivers we investigate is the subspace quiver \(S_q\) equipped with dimension vector \((1^q;2)\). The moduli space is precisely the space of ordered \(q\)-point stable configurations on \(\mathbb{P}^1\). The presentation of their Chow rings was given in \cite[Corollary 29]{franzen2015chow}, together with a comparison with that obtained via symplectic reduction. We compute its degree by applying \zcref{Corollary:toric-red-degree}. The strategy utilizes identities we found in \zcref{Proposition:k2qm-chow-eqs}, which convert both sides of the degree equation in \zcref{Corollary:toric-red-degree} to rational multiples of the same element. Finally, the degree formula is given in \zcref{Proposition:point-config-deg}. 

\section*{Acknowledgement}

I am deeply grateful to my advisors Jens Hornbostel and Markus Reineke for proposing this project and offering tremendous help in discussing and computing these formulas. I want to thank Hans Franzen for providing the idea of reducing computation to the toric case and an alternative formula for the point class in the toric case that greatly simplifies the computations in \zcref{Section:4}. I would also like to thank Gianni Petrella for modifying the julia package \cite{quivertools} to make verification of \zcref{Proposition:point-config-deg} possible for \(q\) up to 13. Furthermore, I want to thank Moritz Firsching for his help in formalizing \zcref{Conjecture:deg-macmahon} in Lean 4 and finding a proof using a LLM by DeepMind. 

This work is done with the funding by DFG Research Training Group 2240: \textit{Algebro-Geometric Methods in Algebra, Arithmetic and Topology}. 

\section{Recollection on quiver moduli}\label{Section:2}

Throughout this paper, we work over complex numbers \(\mathbb{C}\). We recall some basic notions and constructions from \cite{Reineke08mo}. 

A quiver \(Q\) is a directed graph consisting of a vertex set \(Q_0\) and an arrow set \(Q_1\), and equipped with a source map \(s\) and a target map \(t\) from \(Q_1\) to \(Q_0\). We restrict our focus to acyclic quivers in this paper, i.e. quivers without oriented cycles. 

A representation of a quiver is a collection \(M = ((M_i)_{i\in Q_0}, (M_{\alpha})_{\alpha\in Q_1})\) of \(\mathbb{C}\)-vector spaces \(M_i\) and \(\mathbb{C}\)-linear maps \(M_{\alpha}\) from \(M_{s(\alpha)}\) to \(M_{t(\alpha)}\). 
A dimension vector \(\underline{d}\) is a tuple in \(\mathbb{N}^{|Q_0|}\). We write \(\underline{d}(M)\) for the tuple \((\dim (M_i))_{i\in Q_0}\). Sometimes \(M\) will be referred to as a representation of \((Q, \underline{d})\). 
The Euler form of a quiver \(Q\) is the symmetric bilinear form on \(\mathbb{N}^{|Q_0|}\times \mathbb{N}^{|Q_0|}\) defined by 

\[
\langle \underline{d}, \underline{e}\rangle \coloneqq \sum_{i\in Q_0}d_ie_i - \sum_{\alpha\in Q_1}d_{s(\alpha)}e_{t(\alpha)}.
\]

Given \(\underline{d}\) for a quiver \(Q\), we define \(R(Q, d)\coloneqq \bigoplus_{\alpha\in Q_1}\mathrm{Hom}(M_{s(\alpha)}, M_{t(\alpha)})\), the space of all representations of \((Q, \underline{d})\). The group \(G_{\underline{d}} = \prod\limits_{i\in Q_0}GL(M_i)\) acts on \(R(Q, \underline{d})\) via base change:

\[
(g_i)_{i\in Q_0}\cdot (M_{\alpha})_{\alpha\in Q_1} \coloneqq (g_{t(\alpha)}M_{\alpha}g_{s(\alpha)}^{-1})_{\alpha\in Q_1}.
\]

Notice that the diagonally embedded scalars \(\mathbb{C}^*\hookrightarrow G_{\underline{d}}\) act trivially on \(R(Q, \underline{d})\). So the \(G_{\underline{d}}\)-action passes to a \(PG_{\underline{d}}\coloneqq G_{\underline{d}}/ \mathbb{C}^*\)-action. It is then natural to study the quotient of \(R(Q, \underline{d})\) by this action, which in general does not carry the structure of a complex variety. In order to find a nice variety for the quotient, we need the notion of stability. 

A stability parameter for \((Q, \underline{d})\) is a \(\mathbb{Z}\)-linear form \(\theta\) on \(\mathbb{Z}^{|Q_0|}\) such that \(\theta(\underline{d}) = 0\). 

\begin{definition}
    Given \((Q, \underline{d})\) and a stability parameter \(\theta\), a representation \(M\) of \(Q\) with dimension vector \(\underline{d}\) is called \(\theta\)-semistable (resp. stable) if \(\theta (\underline{d}(M')) \leq 0\) (resp. \(\theta(\underline{d}(M')) < 0\)) for all non-zero proper subrepresentations \(M'\subset M\).
\end{definition}

There is a natural inclusion of open subsets of \(\theta\)-stable and \(\theta\)-semistable locus of \(R(Q, \underline{d})\):

\[
R^{\theta\text{-st}}(Q, \underline{d})\subseteq R^{\theta\text{-sst}}(Q, \underline{d})\subseteq R(Q, \underline{d})
\]

We will always assume \(\underline{d}\) is \(\theta\)-coprime, i.e. \(\theta(\underline{d}(M'))\neq 0, \text{ for all non-zero proper } M'\subset M\). As a result, the first inclusion is an identity. By Geometric Invariant Theory (see \cite[Section 3.5]{Reineke08mo} for more details), there exists a smooth projective complex variety \(M^{\theta\text{-st}}(Q, \underline{d})\) of dimension \(1 - \langle \underline{d}, \underline{d}\rangle\eqqcolon N\) if not empty, parametrizing all isomorphism classes of \(\theta\)-stable representations of \(Q\) with dimension vector \(\underline{d}\). 
We will write \(X\) for the variety \(M^{\theta\text{-st}}(Q, \underline{d})\) in some subsections when it is clear from the context. 

In \cite{kingwalter1995chow}, King and Walter described universal bundles \(\mathcal{V}_i\) such that \(\mathrm{rank} (\mathcal{V}_i)=d_i, i\in Q_0\) on \(X\) and showed that \(\mathrm{CH}^*(X)\) is generated by \(c_j(\mathcal{V}_i), j = 1, \dots, d_i, i\in Q_0\). Franzen gave an explicit presentation of \(\mathrm{CH}_{\mathbb{Q}}^*(X)\) in \cite[Proposition 14]{franzen2015chow}. Although not explicitly proven, the presentation should also hold for \(\mathbb{Z}\) coefficients because the proof relies on \cite[Theorem 10.8]{DanilovToric78}, which does not require \(\mathbb{Q}\) coefficients. All computations in this paper will be done with coefficients in \(\mathbb{Q}\), so we omit the subscript \(\mathbb{Q}\) when writing Chow rings. The following presentation is taken from \cite[Section 2]{belmans2024chow}. 

Let \(a\in \mathrm{Hom}_{\mathbb{Z}}(\mathbb{Z}^{|Q_0|}, \mathbb{Z})\) such that \(a(\underline{d}) = \sum_{i\in Q_0}a_i d_i = 1\), which is possible under the assumption that \(\underline{d}\) is \(\theta\)-coprime. Define the permutation group \(W_{\underline{d}} \coloneqq \prod_{i\in Q_0}\text{Sym}_{d_i}\) and polynomial rings 

\[
R \coloneqq \bigotimes_{i\in Q_0}\mathbb{Q}[\xi_{i, 1}, \dots, \xi_{i, d_i}], A\coloneqq R^{W_{\underline{d}}} = \bigotimes_{i\in Q_0}\mathbb{Q}[x_{i, 1}, \dots, x_{i, d_i}],
\]

where \(A\) is the subring of \(W_{\underline{d}}\)-invariants and is generated by \(x_{i,j} = e_j(\xi_{i, 1}, \dots, \xi_{i, d_i})\), the \(j\)-th elementary symmetric polynomial in variables \(\xi_{i, 1}, \dots, \xi_{i, d_i}\) for all \(i\in Q_0, j = 1, \dots, d_i\). Here \(W_{\underline{d}}\) acts via permuting variables. Namely, \(\sigma \cdot f((\xi_{i, j})_{i\in Q_0, j = 1, \dots, d_1}) = f((\sigma_i(\xi_{i, j}))_{i\in Q_0, j = 1, \dots, d_i}) \) for \(\sigma = (\sigma_i)_{i\in Q_0}\in W_{\underline{d}}\). 

There is a natural surjection \(\rho\colon R\rightarrow A\) called the symmetrization map defined as follows:

\begin{equation}
    \rho: R \rightarrow A, f \mapsto \frac{1}{\Delta}\sum_{\sigma\in W_{\underline{d}}}\text{sign}(\sigma)\sigma(f),\label{Map:symmetrization}
\end{equation}

where \(\Delta = \prod_{i\in Q_0}\prod_{1\leqslant k < l\leqslant d_i}(\xi_{i, l} - \xi_{i, k})\) is called the discriminant of the \(W_{\underline{d}}\)-action. It is well-known that every anti-invariant polynomial in \(R\) under the \(W_{\underline{d}}\)-action can be written as a product \(\Delta\cdot g\) for some \(g\in A\) (see \cite[Chapter \Romannum{5}, \S 5, Proposition 5]{bourbaki1994lie} for more details). The ring \(R\) has rank \(\# W_{\underline{d}}\) as an \(A\)-module. Moreover, \(\rho\) restricted on \(R^{\text{anti}}\) is an isomorphism of \(\mathbb{Q}W_{\underline{d}}\)-modules with decreasing degree \(\delta\coloneqq \deg \Delta\), where \(R^{\text{anti}}\) is the anti-invariants of \(R\) under the \(W_{\underline{d}}\)-action. 

\begin{equation}
    \rho: R^{\text{anti}}\rightarrow A, \Delta \cdot g\mapsto g\label{Map:restricted-rho}
\end{equation}

Finally, we define the tautological ideal \(I_{\text{taut}}\) in \(R\) that is generated by polynomials of the form 

\[
f_{d'} = \prod_{\alpha\in Q_1}\prod_{k = 1}^{d'_{s(\alpha)}}\prod_{l = d'_{t(\alpha)} + 1}^{d_{t(\alpha)}}(\xi_{t(\alpha), l} - \xi_{s(\alpha), k})
\]

for all \(\underline{d}'\) such that \(\theta(\underline{d}') > 0\), and the linear ideal \(I_{\text{lin}} = \langle \sum_{i\in Q_0}a_i x_{i, 1}\rangle\) of \(A\). Then \cite[Proposition 14]{franzen2015chow} tells us the following. 

\begin{theorem}\label{Theorem:chow-quiver-pre}
    The \(\mathbb{Q}\)-algebra morphism \(A\rightarrow \mathrm{CH}^*(X)\) given by \(x_{i,j}\mapsto c_j(\mathcal{V}_i)\) is surjective and the kernel is given by \(I_{\text{lin}} + \rho(I_{\text{taut}})\).

\end{theorem}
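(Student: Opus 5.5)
We would prove \zcref{Theorem:chow-quiver-pre} by the standard route of Ellingsrud--Strømme and Brion, as in \cite{franzen2015chow}: compare \(X\) with the quotient of the same representation space by a maximal torus, then pass back to \(G_{\underline{d}}\) by taking anti-invariants.

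\textbf{Step 1: the equivariant Chow ring.} Write \(X = R^{\theta\text{-st}}(Q,\underline{d})/PG_{\underline{d}}\). Because \(\underline{d}\) is \(\theta\)-coprime, \(PG_{\underline{d}}\) acts freely on the stable locus. Hence
\[
\mathrm{CH}^*(X)\cong \mathrm{CH}^*_{PG_{\underline{d}}}(R^{\theta\text{-st}}).
\]
The linear form \(a\) with \(a(\underline{d})=1\) gives a splitting \(G_{\underline{d}}\cong PG_{\underline{d}}\times\mathbb{C}^*\). Using it, \(\mathrm{CH}^*_{G_{\underline{d}}}(R^{\theta\text{-st}})\) is a polynomial ring in one variable over \(\mathrm{CH}^*_{PG_{\underline{d}}}(R^{\theta\text{-st}})\), and that variable is \(\sum_i a_i x_{i,1}\). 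This accounts for \(I_{\text{lin}}\) and for the identification \(x_{i,j}\mapsto c_j(\mathcal{V}_i)\), because the universal bundles \(\mathcal{V}_i\) descend from the tautological bundles, normalized via \(a\).

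\textbf{Step 2: excision and the torus.} Since \(R\) is an affine space, \(\mathrm{CH}^*_{G_{\underline{d}}}(R)=A\). The localization sequence shows that the map \(A\to \mathrm{CH}^*_{G_{\underline{d}}}(R^{\theta\text{-st}})\) is surjective, with kernel the image of the classes of the unstable strata. This already gives the surjectivity claim. To identify the kernel, pass to the maximal torus \(T\subset G_{\underline{d}}\) and use \(\mathrm{CH}^*_{G}=(\mathrm{CH}^*_T)^{W_{\underline{d}}}\) with \(\mathrm{CH}^*_T(R)=R\).

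\textbf{Step 3: the unstable locus.} The unstable locus is a union of \(G_{\underline{d}}\)-saturations \(G\times^{P}R_{\underline{d}'}\) of linear subspaces. Here \(R_{\underline{d}'}\) consists of the representations preserving a fixed flag subspace of dimension vector \(\underline{d}'\) with \(\theta(\underline{d}')>0\), and \(P\) is the corresponding parabolic subgroup. The \(T\)-equivariant class of \(R_{\underline{d}'}\) in \(R\) is the product of the weights of the complementary coordinates, which is exactly \(f_{\underline{d}'}\). Pushing forward from \(G\times^{P}R_{\underline{d}'}\) to \(R\) is computed by the Weyl-type symmetrization formula for \(G/P\)-bundles. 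This yields the classes \(\rho(g f_{\underline{d}'})\) for \(g\in R\). Hence the kernel contains \(\rho(I_{\text{taut}})\).

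\textbf{Main obstacle: equality of the kernel.} We must show the kernel equals \(\rho(I_{\text{taut}})\), not merely contains it. The plan is to show that the images of these strata generate the whole kernel. The strata are not closed or disjoint, so a single excision does not suffice. We would argue with a Hesselink/Kirwan-type stratification, ordered by \(\theta\)-slope of the HN type, whose strata are \(G\times^{P}\)-bundles over open subsets of the \(R_{\underline{d}'}\). An induction over the strata then shows that each kernel piece is generated by pushforwards from the closures \(G\cdot R_{\underline{d}'}\).

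Alternatively, and this is how we would try it first, work torus-equivariantly. Compute \(\mathrm{CH}^*_T(R^{\theta\text{-st}})\) as a quotient of the polynomial ring \(R\) by the ideal generated by the \(f_{\underline{d}'}\). For this we would use \cite[Theorem 10.8]{DanilovToric78}, viewing \(R^{\theta\text{-st}}/T\) through a toric description: the \(T\)-unstable locus is a union of coordinate subspaces, so this is a Stanley--Reisner-type computation. We would then descend along \(\mathrm{CH}^*(R^{\theta\text{-st}}/T)\to \mathrm{CH}^*(X)\). That map is a fibration with fiber the flag variety \(PG/PT\), and passing down uses the isomorphism \(R^{\text{anti}}\cong A\) from \eqref{Map:restricted-rho} together with the Brion/Martin-type formula: \(\mathrm{CH}^*(X)\) is the quotient of the anti-invariants of \(\mathrm{CH}^*(R^{\theta\text{-st}}/T)\). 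The delicate point is checking that the \(T\)-unstable ideal, after anti-symmetrization, gives exactly \(\rho(I_{\text{taut}})\). This holds because stability is \(W_{\underline{d}}\)-equivariantly checked on flags, and the generators \(f_{\underline{d}'}\) for all coordinate sub-dimension vectors are permuted by \(W_{\underline{d}}\).
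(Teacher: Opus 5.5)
Your Steps 1--3 are correct, and they already prove the theorem. You stop at \(\rho(I_{\text{taut}})\subseteq\ker\) only because you misdiagnose the remaining step.

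\textbf{The reverse inclusion is already available.} Each set \(Z_{\underline{d}'}\coloneqq G_{\underline{d}}\cdot R_{\underline{d}'}\) you use is \emph{closed} in \(R(Q,\underline{d})\). It is the image of the proper map \(G_{\underline{d}}\times^{P}R_{\underline{d}'}\hookrightarrow G_{\underline{d}}/P\times R(Q,\underline{d})\to R(Q,\underline{d})\). The unstable locus is the finite union of the \(Z_{\underline{d}'}\) with \(\theta(\underline{d}')>0\).
\begin{itemize}
\item By localization, the kernel of \(A\to\mathrm{CH}^*_{G_{\underline{d}}}(R^{\theta\text{-st}})\) is the image of the equivariant Chow group of \(\bigcup Z_{\underline{d}'}\).
\item That group is spanned by the images from the individual \(Z_{\underline{d}'}\), since an irreducible subvariety of a finite union of closed sets lies in one of them. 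Disjointness plays no role.
\item With \(\mathbb{Q}\)-coefficients, a proper surjective map is surjective on Chow groups. So the image from \(Z_{\underline{d}'}\) equals the image from \(G_{\underline{d}}\times^{P}R_{\underline{d}'}\), which your push-forward computation identifies with \(\{\rho(gf_{\underline{d}'}) : g\in R\}\).
\end{itemize}
Summing over \(\underline{d}'\) gives exactly \(\rho(I_{\text{taut}})\). The Hesselink/HN induction would also work, but it is unnecessary.

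\textbf{The toric route you would try first contains a genuine error.} In general \(R^{\theta\text{-st}}(Q,\underline{d})\) is strictly smaller than the torus-stable locus \(R^{\tilde\theta\text{-st}}(\tilde Q,\underline{1})\), where \(\tilde\theta_{(i,j)}=\theta_i\). Only coordinate subrepresentations are tested for the latter, and the complement of the former is not a union of coordinate subspaces. So \(R^{\theta\text{-st}}/T\) is not toric. It is a \(G/T\)-bundle over \(X\) with Chow ring \(\mathrm{CH}^*(X)\otimes_A R\), not a Stanley--Reisner quotient. For example, take \(S_3\) with \((1,1,1;2)\). Then \(X\) is a point and \(R^{\theta\text{-st}}/T\cong GL_2/T_2\cong(\mathbb{P}^1\times\mathbb{P}^1)\setminus\text{diagonal}\), whose Chow ring is \(\mathbb{Q}\oplus\mathbb{Q}\). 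The toric \(\tilde X\), by contrast, is the del Pezzo surface of degree 6, whose Chow ring has dimension 6.

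Danilov's theorem computes \(\mathrm{CH}^*(\tilde X)\). To descend from it you need the Martin-type fact that multiplication by \(\Delta\) gives a well-defined isomorphism \(\mathrm{CH}^*(X)\to\mathrm{CH}^{*+\delta}(\tilde X)^{\text{anti}}\). Equivalently, anti-invariant classes on \(\tilde X\) supported on the torus-stable but \(G\)-unstable part must vanish. This is \cite[Lemma~25]{franzen2015chow}, which the paper invokes in the toric reduction subsection, and it is the truly delicate step. The point you flag as delicate is easy: that anti-symmetrizing the \(W_{\underline{d}}\)-translates of the \(f_{\underline{d}'}\) yields \(\rho(I_{\text{taut}})\) follows immediately from \(\rho(w(g))=\mathrm{sign}(w)\rho(g)\).

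\textbf{Comparison with the paper.} The paper gives no argument beyond citing \cite[Proposition~14]{franzen2015chow}, whose proof it says relies on Danilov's toric presentation. That is the corrected form of your second route. Your Steps 1--3, completed as above, are a self-contained alternative that avoids toric geometry and Lemma~25. The cost is that they use \(\mathbb{Q}\)-coefficients essentially.
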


Another tool we will use in our computation is the following identification from \cite[Lemma 2.2]{ReiKronecker}.

\begin{lemma}\label{Lemma:quiver-op-iso}
    There exists an isomorphism 

    \[
    M^{\theta\text{-sst}}(Q, \underline{d})\simeq M^{-\theta\text{-sst}}(Q^{\text{op}}, \underline{d}),
    \]

    where \(Q^{\text{op}}\) has the same vertices as \(Q\), but all arrows are reversed. 
\end{lemma}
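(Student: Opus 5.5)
The plan is to build the isomorphism from the canonical duality between representations of \(Q\) and of \(Q^{\text{op}}\), namely taking the linear dual of every vector space and the transpose of every map. For a representation \(M=((M_i),(M_\alpha))\) of \((Q,\underline{d})\), define \(M^*\) as the representation of \(Q^{\text{op}}\) with spaces \(M_i^*\) and, for each reversed arrow \(\alpha^{\text{op}}\colon t(\alpha)\to s(\alpha)\), the map \(M_\alpha^{*}\colon M_{t(\alpha)}^*\to M_{s(\alpha)}^*\). Since \(\dim M_i^*=\dim M_i\), the dimension vector of \(M^*\) is again \(\underline{d}\). After fixing bases, this is the linear isomorphism \(R(Q,\underline{d})\to R(Q^{\text{op}},\underline{d})\) sending \((M_\alpha)\) to \((M_\alpha^{T})\).

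The first step is to make this map equivariant. We let \(G_{\underline{d}}\) act on the target through the automorphism \(g\mapsto (g^{-1})^{T}\). Then
\[
g_{t(\alpha)}M_\alpha g_{s(\alpha)}^{-1}\mapsto (g_{s(\alpha)}^{-1})^{T}M_\alpha^{T}g_{t(\alpha)}^{T},
\]
which is precisely the base-change action of \((g^{-1})^T\) on the representation \(M^{T}\) of \(Q^{\text{op}}\). The diagonal scalars are preserved by this automorphism, so it also makes the map \(PG_{\underline{d}}\)-equivariant.

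The second step, which is the main point, is to check that semistability is preserved when \(\theta\) is replaced by \(-\theta\). Subrepresentations \(M'\subset M\) correspond bijectively to quotient representations \(M^*\twoheadrightarrow M'^*\), and hence to subrepresentations \(U=(M/M')^*\) of \(M^*\), with \(\underline{d}(U)=\underline{d}-\underline{d}(M')\). Because \(\theta(\underline{d})=0\),
\[
-\theta(\underline{d}(U))=\theta(\underline{d}(M')).
\]
So \(\theta(\underline{d}(M'))\le 0\) for all \(M'\) holds exactly when \(-\theta(\underline{d}(U))\le 0\) for all \(U\), and the same argument with strict inequalities handles stability. Consequently the isomorphism restricts to an equivariant isomorphism \(R^{\theta\text{-sst}}(Q,\underline{d})\simeq R^{-\theta\text{-sst}}(Q^{\text{op}},\underline{d})\).

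The final step is to pass to GIT quotients. The moduli space is \(\operatorname{Proj}\) of the ring of semi-invariants \(\bigoplus_n \mathbb{C}[R]^{G,\chi_\theta^n}\). The equivariant linear isomorphism, twisted by the automorphism \(g\mapsto (g^{-1})^T\), takes the character \(\chi_\theta(g)=\prod_i\det(g_i)^{\theta_i}\) to \(\chi_{-\theta}\) (up to the paper's sign convention relating \(\theta\) and its character). It therefore identifies the graded rings of semi-invariants, and the Proj construction yields \(M^{\theta\text{-sst}}(Q,\underline{d})\simeq M^{-\theta\text{-sst}}(Q^{\text{op}},\underline{d})\). The only delicate point I expect is keeping track of this character sign convention, but it is consistent with the semistable loci matching as shown in the second step.
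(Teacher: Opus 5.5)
Your proposal is correct. The paper gives no proof of its own and simply quotes the lemma from \cite[Lemma 2.2]{ReiKronecker}; your argument is the standard proof of that statement. It combines three ingredients: the duality \(M\mapsto M^*\), made equivariant through the automorphism \(\sigma(g)=(g^{-1})^{T}\); the correspondence of subrepresentations of \(M^*\) with annihilators \((M/M')^*\), which gives \(-\theta(\underline{d}-\underline{d}(M'))=\theta(\underline{d}(M'))\) because \(\theta(\underline{d})=0\); and the resulting identification of semi-invariants. The sign-convention concern in your last step is harmless: \(\chi_{\eta}\circ\sigma=\chi_{-\eta}\) for every \(\eta\), so under either linear convention the weight attached to \(-\theta\) on \(Q^{\text{op}}\) pulls back to the weight attached to \(\theta\) on \(Q\).
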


\begin{remark}
    Since we always assume \(\theta\)-coprimality, this isomorphism persists when restricted to the moduli spaces of \(\theta\)-stable representations. 
\end{remark}

\section{Degree computation and toric reduction}\label{Section:3}

In this section, we define the degree of a Fano quiver moduli, and give methods to compute the degree in both toric and non-toric cases. 

\subsection{Definition of the degree}

A dimension vector \(\underline{d}\) is said to be \(\theta\)-strongly amply stable if \(\langle \underline{e}, \underline{d} - \underline{e}\rangle < -2\) for all non-zero proper subvectors \(\underline{e} < \underline{d}\) with \(\theta(\underline{e}) > 0\). 

We consider the canonical stability parameter defined as 

\[
\tcan\coloneqq \langle \underline{d}, \_\rangle - \langle \_, \underline{d}\rangle.
\]

We write \(\Theta_i\) for \(\tcan(\underline{i})\), where \(i\in Q_0\) and \(\underline{i}\) is the unit dimension vector that has 1 at the \(i\)-th entry and 0 elsewhere. The following result is \cite[Theorem 4.3]{frRe2021fano}.

\begin{theorem}\label{Theorem:anti-can-class-formula}
    Assume \(\underline{d}\) is \(\tcan\)-strongly amply stable. Then the moduli space \(X\coloneqq M^{\tcan\text{-st}}(Q, \underline{d})\) is a smooth projective Fano variety of dimension \(N\coloneqq 1 - \langle \underline{d}, \underline{d}\rangle\). Furthermore, the anti-canonical class \(-K_X\coloneqq c_1(\det(\mathcal{T}))\), where \(\mathcal{T}\) is the tangent bundle on \(X\), is computed as 
    
    \begin{equation}
        -K_X = \sum_{i\in Q_0}(-\Theta_i) c_1(\mathcal{V}_i).
    \end{equation}
\end{theorem}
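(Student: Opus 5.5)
The plan is to compute the tangent bundle of \(X\) from the quotient presentation \(X = R^{\tcan\text{-st}}(Q,\underline{d})/PG_{\underline{d}}\), read off its first Chern class in terms of the \(c_1(\mathcal{V}_i)\), and then identify the resulting line bundle with the GIT polarization, which gives ampleness. Throughout I use the standing assumption of Section \ref{Section:2} that \(\underline{d}\) is \(\tcan\)-coprime, so that stable equals semistable. Granting this, smoothness, projectivity and \(\dim X = N\) are the general GIT statements recalled there. The only role of \(\tcan\)-strong ample stability will be in the ampleness step.

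\textbf{Tangent bundle.} \(PG_{\underline{d}}\) acts freely on the stable locus, since stable representations are Schurian. Hence the tangent space of \(X\) at \([M]\) is \(\mathrm{Ext}^1(M,M)\). Globalizing the standard two-term complex computing \(\mathrm{Hom}\) and \(\mathrm{Ext}^1\) via the universal bundles gives an exact sequence of vector bundles on \(X\):
\[
0\to \mathcal{O}_X \to \bigoplus_{i\in Q_0}\mathcal{E}nd(\mathcal{V}_i) \to \bigoplus_{\alpha\in Q_1}\mathcal{H}om(\mathcal{V}_{s(\alpha)},\mathcal{V}_{t(\alpha)}) \to \mathcal{T}_X \to 0 .
\]
Here the left-hand term is \(\mathcal{O}_X\) because \(\mathrm{End}(M)=\mathbb{C}\). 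Taking \(c_1\), the \(\mathcal{E}nd\) terms and \(\mathcal{O}_X\) contribute nothing. Since \(c_1(\mathcal{H}om(\mathcal{V}_s,\mathcal{V}_t)) = d_s c_1(\mathcal{V}_t) - d_t c_1(\mathcal{V}_s)\), we get
\[
c_1(\mathcal{T}_X)=\sum_{i\in Q_0}\Big(\sum_{t(\alpha)=i} d_{s(\alpha)} - \sum_{s(\alpha)=i} d_{t(\alpha)}\Big)c_1(\mathcal{V}_i).
\]

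\textbf{Matching coefficients.} Expanding the Euler form gives
\[
\Theta_i=\langle \underline{d},\underline{i}\rangle-\langle \underline{i},\underline{d}\rangle=\sum_{s(\alpha)=i} d_{t(\alpha)}-\sum_{t(\alpha)=i} d_{s(\alpha)} ,
\]
because the \(d_i\) terms cancel. So the coefficient of \(c_1(\mathcal{V}_i)\) above is exactly \(-\Theta_i\), which is the claimed formula. This is also consistent with the non-uniqueness of the universal bundles. Changing the normalization \(a\) replaces \(\mathcal{V}_i\) by \(\mathcal{V}_i\otimes L\) and shifts the right-hand side by \(-\tcan(\underline{d})\,c_1(L)\). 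This shift vanishes because \(\tcan(\underline{d})=\langle \underline{d},\underline{d}\rangle-\langle \underline{d},\underline{d}\rangle=0\).

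\textbf{Fano property (main obstacle).} The line bundle \(\bigotimes_i\det(\mathcal{V}_i)^{-\Theta_i}\) is the descent to \(X\) of the trivial bundle on \(R^{\text{st}}\) linearized by the character \(\chi_{\tcan}\colon g\mapsto\prod_i\det(g_i)^{\Theta_i}\), up to sign. GIT says the linearization defining the quotient descends to an ample line bundle, so \(-K_X\) would be ample. The delicate points are:
\begin{itemize}
\item checking the sign convention, so that it is \(\chi_{\tcan}\) and not \(\chi_{-\tcan}\) whose semi-invariants cut out the \(\tcan\)-semistable locus in the convention used here;
\item justifying that the descended bundle really is \(\bigotimes_i\det(\mathcal{V}_i)^{-\Theta_i}\).
\end{itemize}
This is where I would use \(\tcan\)-strong ample stability. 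The inequality \(\langle\underline{e},\underline{d}-\underline{e}\rangle<-2\) for every destabilizing \(\underline{e}\) bounds the codimension of each Harder–Narasimhan stratum of the unstable locus from below by \(-\langle\underline{e},\underline{d}-\underline{e}\rangle\). The unstable locus therefore has codimension at least \(2\), and in fact at least \(3\). Consequently
\[
\mathrm{Pic}(X)\cong\mathrm{Pic}^{PG_{\underline{d}}}(R^{\text{st}})\cong\mathrm{Pic}^{PG_{\underline{d}}}(R)=\{\text{characters } \chi \text{ with } \chi(\underline{d})=0\},
\]
and line bundles on \(X\) correspond exactly to characters. I expect carrying out this codimension estimate carefully to be the hardest step.
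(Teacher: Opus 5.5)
The paper gives no proof of this statement; it quotes it from \cite[Theorem 4.3]{frRe2021fano}. Your derivation of the formula is sound. The globalized complex ending in \(\mathcal{T}_X\), the vanishing of \(c_1\) of the \(\mathcal{E}nd\) terms, the identity \(-\Theta_i=\sum_{t(\alpha)=i}d_{s(\alpha)}-\sum_{s(\alpha)=i}d_{t(\alpha)}\), and the normalization check via \(\Theta(\underline{d})=0\) are all correct.

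The gap is in the Fano step. You reduce ampleness to the claim that \(\bigotimes_i\det(\mathcal{V}_i)^{-\Theta_i}\) is the descent of the GIT linearization ``up to sign''. You correctly flag that sign and that identification as the delicate points, but then propose to settle them with the codimension estimate and \(\mathrm{Pic}(X)\cong\{\chi:\chi(\underline{d})=0\}\). That isomorphism only says that every line bundle comes from \emph{some} character. It does not tell you which character \(\det\mathcal{T}_X\) comes from, nor which one is the GIT polarization. The sign is the entire content here: with the opposite sign the same reasoning would show that \(K_X\) is ample. So as written the proposal does not prove that \(X\) is Fano, and the step you single out as hardest is not the one that carries the argument.

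The missing piece is a direct computation that needs no codimension bound.
\begin{itemize}
\item Since \(\pi\colon R^{\text{st}}\to X\) is a principal \(PG_{\underline{d}}\)-bundle, there is an equivariant sequence \(0\to\mathfrak{pg}_{\underline{d}}\otimes\mathcal{O}\to R(Q,\underline{d})\otimes\mathcal{O}\to\pi^*\mathcal{T}_X\to0\) on \(R^{\text{st}}\).
\item The adjoint representation has trivial determinant. Hence \(\pi^*\det\mathcal{T}_X\) is the trivial line bundle on which \(g\) acts by \(\chi(g)\coloneqq\det_{R(Q,\underline{d})}(g)=\prod_\alpha\det(g_{t(\alpha)})^{d_{s(\alpha)}}\det(g_{s(\alpha)})^{-d_{t(\alpha)}}=\prod_i\det(g_i)^{-\Theta_i}\).
\item Equivalently, \(\mathcal{V}_i\) descends from \(R^{\text{st}}\times\mathbb{C}^{d_i}\) with \(g\) acting by \(g_i\) twisted by \(\prod_j\det(g_j)^{-a_j}\). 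These twists cancel in \(\bigotimes_i\det(\mathcal{V}_i)^{-\Theta_i}\) because \(\Theta(\underline{d})=0\).
\item King's theorem is stated for subrepresentations with \(\theta\geq0\) and weights \(\prod_i\det(g_i)^{\theta_i}\). In the paper's convention (subrepresentations with \(\theta\leq0\)) it says that the \(\Theta\)-semistable locus is cut out by semi-invariants \(f\) with \(f(g\cdot M)=\chi(g)^nf(M)\), \(n\geq1\).
\item As a sign check: for the \(m\)-Kronecker quiver with \(\underline{d}=(1,1)\) these are the degree-\(m\) forms on \(\mathbb{P}^{m-1}\), i.e.\ sections of \(\mathcal{O}(m)=\omega^{-1}\).
\end{itemize}
Thus \(\det\mathcal{T}_X\) is the descent of exactly the linearization for which \(X=\mathrm{Proj}\bigoplus_{n}\mathbb{C}[R(Q,\underline{d})]^{G_{\underline{d}},\chi^n}\). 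A positive power of it is \(\mathcal{O}(n)\), so \(-K_X\) is ample.

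Your codimension estimate is correct, but it is not needed for ampleness. For a Harder--Narasimhan type \((\underline{d}^1,\dots,\underline{d}^l)\) one has \(-\langle\underline{d}^k,\underline{d}^l\rangle=\dim\mathrm{Ext}^1(M^k,M^l)\geq0\) for \(k<l\), for semistable \(M^k,M^l\) of these dimension vectors. So the codimension is at least \(-\langle\underline{d}^1,\underline{d}-\underline{d}^1\rangle\). Its genuine role here is to show \(R^{\text{st}}(Q,\underline{d})\neq\emptyset\). The assertion \(\dim X=N\) requires this, and you never state it.
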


For quiver moduli in our setting, all points are rationally equivalent. Then another ingredient for the computation is the point class formula given in \cite[Theorem B]{belmans2024chow},

\begin{equation}
    [pt] = \frac{\prod_{\alpha\in Q_1}c(\mathcal{V}_{t(\alpha)})^{d_{s(\alpha)}}}{\prod_{i\in Q_0}c(\mathcal{V}_i)^{d_i}}\big|_N.\label{Equation:quiver-moduli-pt}
\end{equation}

Here \(|_N\) means taking the homogeneous degree \(N\) part of the expression to which it is applied. This can be done by formally expand the inverse of total Chern classes in the denominator, which is well-established in \cite[Section 3.2]{Fulton98}. 

\begin{definition}
    Assume the conditions of \zcref{Theorem:anti-can-class-formula}. The degree of the anti-canonical class \(-K_X\) is the unique integer \(D\) such that the equation

    \begin{equation}
        (-K_X)^N = D[pt]\label{Equation:Fano-deg-def}
    \end{equation}

    holds in \(CH^N(X)\). 
\end{definition}

The number \(D\) is indeed the degree of \(X\) with respect to the anti-canonical embedding. Let \(\phi:X\hookrightarrow \mathbb{P}^n\) be the anti-canonical embedding of \(X\) into some \(\mathbb{P}^n\). The degree \(\deg(\phi)\) with respect to \(\phi\) is the cardinality of the intersection of \(X\) with \(N\) hyperplanes in general position, i.e. \(\deg(\phi)[pt]_{\mathbb{P}^n} = \phi_*([X])\cdot h^N\) holds in \(\mathrm{CH}^{n}(\mathbb{P}^n)\), where \(h\coloneqq c_1(\mathcal{O}_{\mathbb{P}^n}(1))\). Notice that \(-K_X = \phi^*(h)\). By \cite[Proposition 2.5(c)]{Fulton98}, we have an equation in $\mathrm{CH}^n(\mathbb{P}^n)$

\begin{align*}
    D[pt]_{\mathbb{P}^n} = \phi_*(D[pt]_X) &= \phi_*((-K_X)^N\cdot [X]) \\ 
    &= h\cdot \phi_*((-K_X)^{N-1}\cdot [X]) = \cdots  = h^N\cdot \phi_*([X]) = \deg(\phi)[pt]_{\mathbb{P}^n}.
\end{align*}

\subsection{The point class in the toric case}

When the dimension vector is \(\underline{1} = (1)_{i\in Q_0}\), the moduli space \(X \coloneqq M^{\tcan\text{-st}}(Q, \underline{1})\) is a toric variety, on which \(\mathbb{G}_m^{|Q_1|} / PG_{\underline{1}}\) acts with a dense orbit (an argument is given in \cite[Section 4.1]{franzen2015chow}). Recall from \cite{franzen2015chow} that the support \(\text{supp}(M)\) of \(M\in R(Q, \underline{1})\) is \(\{\alpha\in Q_1~|~M_{\alpha}\neq 0\}\), and a subset \(J\subset Q_1\) is called \(\theta\)-semistable (resp. \(\theta\)-stable) if there exists a \(\theta\)-semistable (resp. \(\theta\)-stable) representation \(M\) of \((Q, \underline{1})\) such that \(\text{supp}(M) = J\). We say \(J\subset Q_1\) is a spanning tree if the underlying undirected graph of \((Q_0, J)\) is connected and acyclic. In a private communication, H. Franzen provided an alternative formula for the point class in the toric case and a sketch of the proof. We state and prove it below. 

\begin{theorem}\label{Theorem:point-class-via-spanning-tree}
    Assume \((Q, \underline{1})\) satisfies the conditions of \zcref{Theorem:anti-can-class-formula} and \(\theta\) is a stability parameter such that \(R^{\theta\text{-st}}(Q, \underline{1})\) is non-empty. There exists a \(\theta\)-stable spanning tree \(\Gamma\) of \(Q\), and the point class of \(X\) is 

    \begin{equation}\label{Equation:toric-pt}
        [pt] = \prod_{\alpha\in Q_1\backslash \Gamma}(c_1(\mathcal{L}_{t(\alpha)}) - c_1(\mathcal{L}_{s(\alpha)}))
    \end{equation}
\end{theorem}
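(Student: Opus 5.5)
We first produce the spanning tree, then realise the product in \eqref{Equation:toric-pt} as the class of a torus fixed point, i.e.\ as a transversal intersection of torus-invariant divisors.

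\textbf{Existence of \(\Gamma\).} Choose a \(\theta\)-stable \(M\in R(Q,\underline{1})\) and set \(J=\text{supp}(M)\); then \(J\) is \(\theta\)-stable, and stability forces \((Q_0,J)\) to be connected. Among all \(\theta\)-stable subsets of \(J\), take one that is minimal with respect to inclusion, and call it \(\Gamma\). We claim \(\Gamma\) is acyclic. Let \(\Gamma'\) be \(\Gamma\) with one arrow removed.
\begin{itemize}
\item If \((Q_0,\Gamma')\) were still connected, some representation with support \(\Gamma'\) should remain \(\theta\)-stable, contradicting minimality.
\item By the GIT description recalled from \cite{franzen2015chow}, the stable subsets are an open condition on the torus-orbit closures. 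Stable orbits are free modulo scalars, and a free orbit has dimension equal to \(|J|\) minus the number of independent cycles. Connectivity is therefore the only obstruction to keeping stability on a face.
\end{itemize}
So we need the combinatorial criterion: \(J\) is \(\theta\)-stable iff for every proper nonempty \(S\subset Q_0\) closed under \(J\)-successors, \(\theta(\underline{S})<0\). Removing an arrow that lies on a cycle of the undirected graph does not create new \(J\)-successor-closed subsets of the kind that break stability. I expect this to be the delicate combinatorial step. The plan there is to repeatedly delete arrows lying on undirected cycles while checking that successor-closed subsets stay controlled, and to fall back on Franzen's description of torus fixed points if necessary.

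\textbf{The point class.} For a spanning tree \(\Gamma\) the torus \(\mathbb{G}_m^{|Q_1|}/PG_{\underline{1}}\) has dimension \(|Q_1|-|Q_0|+1=N\), and \(|Q_1\setminus\Gamma|=N\). The representation \(M_\Gamma\) with support exactly \(\Gamma\) and all nonzero entries equal to \(1\) is \(\theta\)-stable. Its orbit is a single point \(p\), because the gauge group acts simply transitively on \((\mathbb{C}^*)^{\Gamma}\) modulo scalars. Hence \(p\) is a torus fixed point.

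For each \(\alpha\in Q_1\setminus\Gamma\), let \(D_\alpha=\{M_\alpha=0\}\). Its class is \(c_1\) of the line bundle \(\mathcal{L}_{s(\alpha)}^\vee\otimes\mathcal{L}_{t(\alpha)}\), since \(M_\alpha\) is a section of this bundle. This gives
\[
[D_\alpha]=c_1(\mathcal{L}_{t(\alpha)})-c_1(\mathcal{L}_{s(\alpha)}).
\]
The intersection \(\bigcap_{\alpha\notin\Gamma}D_\alpha\) consists of stable points supported in \(\Gamma\), which is exactly \(p\). The intersection is transversal at \(p\): in the local chart around \(p\), the coordinates \(M_\alpha\) for \(\alpha\notin\Gamma\) (after normalising the tree arrows to \(1\)) form a system of affine coordinates on the smooth toric chart \(\mathbb{A}^N\).

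Therefore \(\prod_{\alpha\notin\Gamma}[D_\alpha]=[p]=[pt]\), using that all points are rationally equivalent. This is equation \eqref{Equation:toric-pt}.
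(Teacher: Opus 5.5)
Your construction of the point class is correct; the gap is in the existence of \(\Gamma\). The claim in your first bullet, that if \(\Gamma\setminus\{\alpha\}\) stays connected then it stays \(\theta\)-stable, is false. Take \(K_{(2,3)}^{1}\) with dimension vector \(\underline{1}\). Here \(\Theta_{v_i}=3\) and \(\Theta_{w_j}=-2\). The support \(J=\{v_1\to w_1,\ v_1\to w_2,\ v_2\to w_1,\ v_2\to w_2,\ v_2\to w_3\}\) is \(\Theta^{\mathrm{can}}\)-stable. Now delete \(v_1\to w_1\), which lies on the cycle \(v_1w_1v_2w_2\). What remains is a connected spanning tree, yet \(\{v_1,w_2\}\) is closed under successors and gives a subrepresentation of \(\Theta^{\mathrm{can}}\)-value \(3-2=1>0\). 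Of the four arrows on that cycle, only deleting \(v_2\to w_1\) or \(v_2\to w_2\) preserves stability. So connectivity is not the only obstruction, and your second bullet does not repair this. Its dimension count is also off: a representation with connected support \(J\) has torus orbit of dimension \(|J|-|Q_0|+1\), which is the number of independent cycles, not \(|J|\) minus it. As you acknowledge, this step is left open.

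The missing input is projectivity of \(X\), not combinatorics. The paper applies the Borel fixed point theorem to \((\mathbb{C}^*)^{Q_1}\) acting on \(X\). A fixed point \([M]\) has connected support, since otherwise \(M\) decomposes. Its orbit has dimension \(|J_M|-|Q_0|+1\), which must vanish, so \(J_M\) is a \(\theta\)-stable spanning tree.

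Your minimal-support idea can be rescued the same way. Since \(R^{\theta\text{-st}}(Q,\underline{1})\to X\) is a good quotient, the closure of the orbit \(O_J\) of classes with support exactly \(J\) lies in the union of the \(O_{J'}\) over stable \(J'\subseteq J\). For minimal \(J\), the orbit \(O_J\) is therefore closed in the projective variety \(X\). Being a quotient of a torus, it is affine, hence a point, which forces \(|J|=|Q_0|-1\).

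Once \(\Gamma\) is in hand, your divisor argument is essentially the paper's. The paper uses the section \((M_\alpha)_{\alpha\notin\Gamma}\) of \(\bigoplus_{\alpha\notin\Gamma}\mathcal{L}_{s(\alpha)}^{\vee}\otimes\mathcal{L}_{t(\alpha)}\) and Fulton's Proposition 14.1(a). Your chart \(\{M_\alpha\neq 0 \text{ for all } \alpha\in\Gamma\}/PG_{\underline{1}}\cong\mathbb{A}^{Q_1\setminus\Gamma}\) lies entirely in the stable locus, because enlarging the support only removes subrepresentations. It also supplies the multiplicity-one statement that the paper leaves implicit.
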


\begin{proof}
    Consider the torus action \((\mathbb{C}^*)^{|Q_1|}\) on \(X\) given by 

    \begin{align*}
        (\mathbb{C}^*)^{|Q_1|} \times X&\rightarrow X\\ 
        (c_{\alpha})_{\alpha\in Q_1}\cdot [(M_{\alpha})_{\alpha\in Q_1}] &\mapsto [(c_{\alpha}M_{\alpha})_{\alpha\in Q_1}].
    \end{align*}
    
    It has at least one fixed point \([M]\) whose support is a \(\theta\)-stable spanning tree. 

    This claim can be proven by contradiction. If \(J_M\) is not connected, \(M\) is decomposable, hence not stable. Suppose the underlying undirected graph of \(J_M\) has a cycle \(\alpha_1, \dots, \alpha_s\) of length \(s\geq 2\) (this includes multiple arrows between vertices). The above torus action on \(M\) is the same as \((\mathbb{C}^*)^{|J_M|}\)-action on \(M\), where \(|J_M|\geq |Q_0|\) in this case. Then this action cannot fix \([M]\) since it has a positive dimensional orbit. 
    
    We fix such a \(\tcan\)-stable spanning tree \(\Gamma\). Let \(M_{\Gamma}\) be the representation in \(R(Q, \underline{1})\) given by

    \[
    M_{\alpha} = \begin{cases}
        1, & \text{if \(\alpha\in \Gamma\),}\\ 
        0, & \text{if \(\alpha\notin \Gamma\).}
    \end{cases}
    \]

    We write \(w = [M_{\Gamma}]\in X\). Let \(\mathcal{L}_i, i\in Q_0\) be the universal line bundles on \(X\). Consider the bundle below determined by a spanning tree \(\Gamma\):

    \[
    \mathcal{E}_{\Gamma} = \bigoplus_{\alpha\in Q_1\backslash\Gamma}\mathcal{L}_{s(\alpha)}^{\vee}\otimes\mathcal{L}_{t(\alpha)},
    \]

    which has rank \(|Q_1| - |Q_0| + 1 = N\). It has a natural section \(f\in H^0(X, \mathcal{E}_{\Gamma})\) given by 

    \[
    f(y) = (M'_{\alpha})_{\alpha\in Q_1\backslash\Gamma}\in \mathcal{E}_{\Gamma}|_y = \bigoplus_{\alpha\in Q_1\backslash \Gamma}\mathrm{Hom}(M'_{s(\alpha)}, M'_{t(\alpha)})
    \]

    for \(y = [M']\in X\). We can easily verify that \(f(w) = 0\). Conversely, given any \(y = [M']\in X\) such that \(f(y) = 0\), we have \(M'_{\alpha} = 0\) for \(\alpha\in Q_1\backslash \Gamma\). Since \(M'\) is \(\theta\)-stable, the support of \(M'\) must be \(\Gamma\). Hence, \(M'\simeq M\). This way, we have shown that \(Z(s) = \{w\}\). Since dimension vectors considered in our setting are all \(\theta\)-coprime, the quiver moduli \(X\) is rational by \cite[Theorem 6.4]{Schofield01}. Hence, \(\mathrm{CH}^N(X)\) has rank 1 and is generated by any point. By \cite[Proposition 14.1 (a)]{Fulton98}, we have 

    \[
    [pt] = [w] = c_N(\mathcal{E}_{\Gamma}) = \prod_{\alpha\in Q_1\backslash \Gamma}c_1(\mathcal{L}_{s(\alpha)}^{\vee}\otimes \mathcal{L}_{t(\alpha)}) = \prod_{\alpha\in Q_1\backslash \Gamma}(c_1(\mathcal{L}_{t(\alpha)}) - c_1(\mathcal{L}_{s(\alpha)}))
    \]
\end{proof}

\subsection{Toric reduction}

One advantage we have when computing the degree in the toric case is that the symmetrization map \(\rho\) is much simpler since the permutation group \(W_{\underline{1}}\) is trivial. We shall see that the degree computation of non-toric quiver moduli can be reduced to a computation in a toric setting. 

Recall from \cite[Section 4.2]{franzen2015chow} the following notion.

\begin{definition}
    Let \(Q\) be an acyclic quiver and \(\underline{d}\) a dimension vector. The covering quiver \(\tilde{Q}\) of \(Q\) with respect to \(\underline{d}\) is the pair \((\tilde{Q}_0, \tilde{Q}_1)\) given by

    \begin{align*}
        \tilde{Q}_0 &\coloneqq \{(i, j)~|~i\in Q_0, 1\leq j\leq d_i\}\\ 
        \tilde{Q}_1 &\coloneqq \{(\alpha, j_1, j_2)~|~\alpha\in Q_1, 1\leq j_1\leq d_{s(\alpha)}, 1\leq j_2\leq d_{t(\alpha)}\}.
    \end{align*}
\end{definition}

Heuristically, we split up the \(i\)-th vertex in \(Q_0\) into \(d_i\) new vertices in \(\tilde{Q}_0\) and preserve all arrows adjacent to it. Consequently, we have \(R(Q, \underline{d}) = R(\tilde{Q}, \underline{1})\). Denote by \(\tilde{\Theta}^{\text{can}}\) the canonical stability parameter associated to \(\tilde{Q}\) with \(\underline{1}\). Direct computation shows that \(\tilde{\Theta}_{(i, j)} = \Theta_i, \text{ for all } (i, j)\in \tilde{Q}_0\). We thus have an identification \(R^{\tcan\text{-st}}(Q, \underline{d}) = R^{\tilde{\Theta}^{\text{can}}\text{-st}}(\tilde{Q}, \underline{1})\), on which \(G_{\underline{d}}\) and \(G_{\underline{1}}\) act respectively. After taking quotients, we have \(X\coloneqq \modulithetastable{Q}{\underline{d}}\), and \(\tilde{X}\coloneqq \modulitildethetastable{\tilde{Q}}{\underline{1}}\). 

To compare the Chow rings of \(X\) and \(\tilde{X}\), we start with the ring \(R = \bigotimes_{i\in Q_0}\mathbb{Q}[\xi_{i, 1}, \dots, \xi_{i, d_i}]\) introduced in \zcref{Section:2} and compute the invariants under \(W_{\underline{d}}\) and \(W_{\underline{1}}\) actions respectively. 

\[\begin{tikzcd}
	& R & \\
	{R^{W_{\underline{d}}}} && {R^{W_{\underline{1}}}}
	\arrow["{\rho^{W_{\underline{d}}}}"', from=1-2, to=2-1]
	\arrow["{\rho^{W_{\underline{1}}}}", from=1-2, to=2-3]
\end{tikzcd}\]

The permutation group \(W_{\underline{1}}\) is trivial, so we simply have \(\rho^{W_{\underline{1}}}: R\simeq R^{W_{\underline{1}}}\) as \(\mathbb{Q}\)-modules. We will just write \(\rho\) for \(\rho^{W_{\underline{d}}}\) below. Choose \(a\in \mathrm{Hom}_{\mathbb{Z}}(\mathbb{Z}^{|Q_0|}, \mathbb{Z})\) such that \(a({\underline{d}}) = 1\). Denote \(a_i\coloneqq a(\underline{i}) \) for \(i\in Q_0\). Let \(\tilde{a}\in \mathrm{Hom}_{\mathbb{Z}}(\mathbb{Z}^{|\tilde{Q}_0|}, \mathbb{Z})\) such that \(\tilde{a}(\underline{(i, j)}) = a_i\). Then \(\tilde{a}(\underline{1}) = a(\underline{d}) = 1\). We write \(\tilde{I}_{\text{lin}}\) for the ideal \(\langle \sum_{(i, j)\in \tilde{Q}_0}\tilde{a}_{(i, j)} \xi_{i,j}\rangle \) in \(R\). Combined with the map in \zcref{Theorem:chow-quiver-pre}, we have a diagram modified from that in \cite[Section 4.2]{franzen2015chow}, where the two middle terms are denoted by \(C\) and \(A\), and the ideals in these two rows are \(\mathfrak{c}, \mathfrak{a}\) in the original diagram. We also added the induced map \(\bar{\rho}^{-1}\) on the quotient which is not present in op.cit. 

\[\begin{tikzcd}
	0 & {\tilde{I}_{\text{lin}} + I_{\text{taut}}} & {\bigotimes\limits_{i\in Q_0}\mathbb{Q}[\xi_{i, 1}, \dots, \xi_{i, d_i}]} & {\mathrm{CH}^*(\tilde{X})} & 0 \\
	0 & {I_{\text{lin}} + \rho(I_{\text{taut}})} & {\bigotimes\limits_{i\in Q_0}\mathbb{Q}[x_{i, 1}, \dots, x_{i, d_i}]} & {\mathrm{CH}^*(X)} & 0
	\arrow[from=1-1, to=1-2]
	\arrow[from=1-2, to=1-3]
	\arrow[from=1-3, to=1-4]
	\arrow[from=1-4, to=1-5]
	\arrow[from=2-1, to=2-2]
	\arrow["\rho^{-1}", from=2-2, to=1-2]
	\arrow[from=2-2, to=2-3]
	\arrow["\rho^{-1}", from=2-3, to=1-3]
	\arrow[from=2-3, to=2-4]
	\arrow[from=2-4, to=2-5]
    \arrow[dashed, "\bar{\rho}^{-1}", from=2-4, to=1-4]
\end{tikzcd}\]

Both rows are exact, and the map \(\rho^{-1}\) is the inverse of \(\rho\) restricted to \(R^{\text{anti}}\), which is the isomorphism \ref{Map:restricted-rho} of \(\mathbb{Q}W_{\underline{d}}\)-modules considered above. Written explicitly,

\begin{equation}
    \rho^{-1}(g((x_{i,j})_{i\in Q_0, j = 1, \dots, d_i})) = \Delta g((e_j(\xi_{i, 1}, \dots, \xi_{i, d_i}))_{(i, j)\in \tilde{Q}_0}).
    \label{Equation:rho-bar-inv-def}
\end{equation}

Then by \cite[Lemma 25]{franzen2015chow}, we know that \(\rho^{-1}\) passes to an isomorphism \(\bar{\rho}^{-1}\) on the quotient \(\mathrm{CH}^*(X)\rightarrow \mathrm{CH}^*(\tilde{X})^{\text{anti}}\). Notice that for \(-K_X = \sum_{i\in Q_0}-\tcan_ix_{i, 1}\), 

\begin{align*}
    \bar{\rho}^{-1}((-K_X)^N) &= \Delta \left( \sum_{i\in Q_0}\left(\sum_{\alpha:j\rightarrow i}d_j - \sum_{\beta:i\rightarrow k}d_k\right)x_{i, 1} \right)^N\\ 
    &= \Delta \left(\sum_{i\in Q_0}\left(\sum_{\alpha:j\rightarrow i}d_j - \sum_{\beta:i\rightarrow k}d_k\right)\sum_{j = 1}^{d_i}\xi_{i,j}\right)^N\\ 
    &= \Delta \left( \sum_{(i,j)\in \tilde{Q}_0}-\tilde{\Theta}_{(i,j)}\xi_{i,j} \right)^N\\
    &= \Delta (-K_{\tilde{X}})^N.
\end{align*}

Therefore, after applying \(\bar{\rho}^{-1}\) to \zcref{Equation:Fano-deg-def} we obtain the following corollary of \cite[Lemma 25]{franzen2015chow}.

\begin{corollary}
    \label{Corollary:toric-red-degree}
    Let \(Q\) be an acyclic quiver with dimension vector \(\underline{d}\) and assume \(\underline{d}\) is \(\tcan\)-coprime and \(\tcan\)-amply stable. Then the degree of \(-K_X\) of the Fano variety \(X = M^{\tcan\text{-st}}(Q, \underline{d})\) can be computed by the following equation in \(\displaystyle CH^{N + \delta}_{\mathbb{Q}}(\tilde{X})\):

    \[
    (-K_{\tilde{X}})^N\Delta = D\bar{\rho}^{-1}([pt]).
    \]
\end{corollary}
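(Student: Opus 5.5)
The plan is to apply the map \(\bar{\rho}^{-1}\colon \mathrm{CH}^*(X)\rightarrow \mathrm{CH}^*(\tilde{X})^{\text{anti}}\) to both sides of the defining equation \((-K_X)^N = D[pt]\) from \zcref{Equation:Fano-deg-def}, and to identify the resulting left-hand side with \((-K_{\tilde{X}})^N\Delta\).

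First I will check that the hypotheses make \(\bar{\rho}^{-1}\) available and well defined. By \cite[Lemma 25]{franzen2015chow}, the map \(\rho^{-1}\colon A\rightarrow R^{\text{anti}}\), \(g\mapsto \Delta g\), carries \(I_{\text{lin}}+\rho(I_{\text{taut}})\) into \(\tilde{I}_{\text{lin}}+I_{\text{taut}}\). Hence it descends to a \(\mathbb{Q}\)-linear isomorphism from \(\mathrm{CH}^*(X)\) onto the anti-invariant part of \(\mathrm{CH}^*(\tilde{X})\), and this map raises degree by \(\delta\). To use it we need the identification \(R^{\tcan\text{-st}}(Q,\underline{d}) = R^{\tilde{\Theta}^{\text{can}}\text{-st}}(\tilde{Q},\underline{1})\) recalled above. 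We also need \(\tilde{X}\) to be a smooth projective variety with Chow ring presented as in \zcref{Theorem:chow-quiver-pre}. This uses the \(\tcan\)-coprimality of \(\underline{d}\): every subvector of \(\underline{1}\) for \(\tilde{Q}\) has \(\tilde{\Theta}\)-value equal to \(\tcan\) of some subvector of \(\underline{d}\), since \(\tilde{\Theta}_{(i,j)}=\Theta_i\), so \(\underline{1}\) is \(\tilde{\Theta}^{\text{can}}\)-coprime. I expect this compatibility check to be the main point needing care. It is easy to overlook that the lemma is stated for the stability \(\tcan\) and not only for generic \(\theta\), and that the linear ideals match because \(\tilde{a}_{(i,j)}=a_i\) gives \(\rho^{-1}\bigl(\sum_i a_i x_{i,1}\bigr)=\Delta\sum_{(i,j)}\tilde{a}_{(i,j)}\xi_{i,j}\).

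Next, linearity of \(\bar{\rho}^{-1}\) gives \(\bar{\rho}^{-1}((-K_X)^N) = D\,\bar{\rho}^{-1}([pt])\) in \(\mathrm{CH}^{N+\delta}(\tilde{X})\). The computation displayed just before the corollary then rewrites the left side. By \zcref{Theorem:anti-can-class-formula}, \(-K_X=\sum_i(-\Theta_i)x_{i,1}\). Substituting \(x_{i,1}=\sum_j\xi_{i,j}\) and \(\tilde{\Theta}_{(i,j)}=\Theta_i\) yields \(\Delta\bigl(\sum_{(i,j)}-\tilde{\Theta}_{(i,j)}\xi_{i,j}\bigr)^N\). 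Applying \zcref{Theorem:anti-can-class-formula} to \(\tilde{Q}\), where each \(\xi_{i,j}\) is the first Chern class of the line bundle \(\mathcal{L}_{(i,j)}\), this equals \(\Delta(-K_{\tilde{X}})^N\).

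Strictly, applying \zcref{Theorem:anti-can-class-formula} to \(\tilde{Q}\) needs strong ample stability for \((\tilde{Q},\underline{1})\). This can be avoided: \(\sum -\tilde{\Theta}_{(i,j)}\xi_{i,j}\) is simply the element we denote \(-K_{\tilde{X}}\), and the formula is only used as notation. Finally, because \(\bar{\rho}^{-1}\) is injective and \(\mathrm{CH}^N(X)\) has rank one, the resulting equation determines \(D\) uniquely, which proves the corollary.
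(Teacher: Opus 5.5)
Your proposal is correct and is essentially the paper's argument: apply $\bar{\rho}^{-1}$ to $(-K_X)^N = D[pt]$, use $\mathbb{Q}$-linearity, and rewrite $\Delta\bigl(\sum_i -\Theta_i x_{i,1}\bigr)^N$ as $\Delta(-K_{\tilde{X}})^N$ via $x_{i,1}=\sum_j \xi_{i,j}$ and $\tilde{\Theta}_{(i,j)}=\Theta_i$. Your side worry about strong ample stability for $(\tilde{Q},\underline{1})$ also goes away: for the indicator vector $\tilde{e}$ of a subset of $\tilde{Q}_0$ containing $e'_i$ vertices over each $i$, one has $\tilde{\Theta}^{\text{can}}(\tilde{e})=\Theta^{\text{can}}(\underline{e}')$ and $\langle \tilde{e},\underline{1}-\tilde{e}\rangle_{\tilde{Q}} = \langle \underline{e}',\underline{d}-\underline{e}'\rangle_Q - \sum_i e'_i(d_i-e'_i)$, so the condition is inherited from $(Q,\underline{d})$.
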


\section{Bipartite quiver moduli and point configuration spaces on \texorpdfstring{\(\mathbb{P}^1\)}{P1}}\label{Section:4}

In this section, we study complete bipartite quivers \(\ktqm\) with vertices \(\{v_1, v_2, w_1, \dots, w_q\}\) and \(m\) arrows from \(v_i\) to \(w_j\) for all \(i = 1, 2, j = 1, \dots, q\). The quiver moduli \(\modulithetastable{\ktqm}{\underline{1}}\) is a toric Fano variety of dimension \(N = 2qm - q - 1\). We will compute its degree in \zcref{Subsection:4-1}, and describe some identities in its Chow groups in \zcref{Subsection:4-2}. Finally in \zcref{Subsection:4-3}, we will apply these identities to the computation of the degree of space of point configurations on \(\mathbb{P}^1\). 

\subsection{Degree formula for \texorpdfstring{\(\modulithetastable{\ktqm}{\underline{1}}\)}{M(K2qm, 1)}}\label{Subsection:4-1}

Following \zcref{Theorem:chow-quiver-pre}, we see that 

\[
\mathrm{CH}^*(\modulithetastable{\ktqm}{\underline{1}})\simeq \mathbb{Q}[x_1, x_2;y_1, \dots, y_q] / (I_{\text{lin}} + I_{\text{taut}}),
\]

where \(x_i\coloneqq c_1(\mathcal{L}_{v_i}), y_i\coloneqq c_1(\mathcal{L}_{w_j})\), and the ideal \(I_{\text{lin}} + I_{\text{taut}}\) is generated by four types of relations:

\begin{itemize}
    \itemsep=0pt
    \parskip=0pt
    \parsep=0pt
    
    \item Linear relation: \(a_1(x_1 + x_2) + a_2(y_1 + \cdots y_q) = 0\) for some \(a_1, a_2\in \mathbb{Z}\) satisfying \(2a_1 + qa_2 = 1\). Here \(a\) is chosen this way to be compatible with the \(\text{Sym}_2\times \text{Sym}_q\)-action on \(K_{(2,q)}^m\). 

    \item Tautological relation (1): \(T_j^{(1)} = (y_j - x_1)^m(y_j - x_2)^m\) for $j = 1, \dots, q$.

    \item Tautological relation (2): \(T_I^{(2)} = \prod\limits_{j\in I}(y_j - x_1)^m\) for \(I\subset \{1, \dots, q\}\) such that \(|I| = \frac{q + 1}{2}\). 

    \item Tautological relation (3): \( T_I^{(3)} = \prod\limits_{j\in I}(y_j - x_2)^m\) for \(I\subset \{1, \dots, q\}\) such that \(|I| = \frac{q + 1}{2}\). 
\end{itemize}

In fact, our computation does not depend on the choice of \(a\). So we won't specify it below. 

By \zcref{Theorem:anti-can-class-formula}, we have 

\[
-K_X = -\Theta_{v_1}x_1 - \Theta_{v_2}x_2 + \Theta_{w_1}y_1 + \cdots + \Theta_{w_q}y_q = m\sum_{j = 1}^q[(y_j - x_1) + (y_j - x_2)].
\]

Now we come to the point class. First we find all possible \(\tcan\)-stable spanning trees. Consider a spanning tree \(\Gamma(J_1, J_2)\) such that \(J_1\cup J_2 = \{w_1, \dots, w_q\}, |J_1| = |J_2| = \frac{q+1}{2}, |J_1\cap J_2| = 1\), and the arrows are those from \(v_1\) to \(J_1\) and from \(v_2\) to \(J_2\). 

\begin{lemma}
    Given a spanning tree \(\Gamma(J_1, J_2)\), define a representation \(M\in R(K_{(2, q)}^m, \underline{1})\) by 

    \[
    M_{\alpha} = \begin{cases}
        1, & \alpha\in \Gamma(J_1, J_2),\\ 
        0, & \text{else}
    \end{cases}
    \]

    Then \(M\) is a \(\tcan\)-stable representation of \(K_{(2, q)}^m\), hence \(\Gamma(J_1, J_2)\) is a \(\tcan\)-stable spanning tree. Moreover, there are no other \(\tcan\)-stable spanning trees. 
\end{lemma}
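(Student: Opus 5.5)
The plan is a direct computation of \(\tcan\) on subdimension vectors, followed by a check of the subrepresentations that can actually occur when the support is a tree. First I compute \(\tcan(\underline{e})\) for \(\underline{e}\leq \underline{1}\). Write \(a\in\{0,1,2\}\) for the number of vertices \(v_i\) in the support of \(\underline{e}\) and \(b\) for the number of vertices \(w_j\). From the Euler form one gets
\[
\langle \underline{1},\underline{e}\rangle = a+b-2mb, \qquad \langle \underline{e},\underline{1}\rangle = a+b-mqa .
\]
Hence \(\tcan(\underline{e}) = m(qa-2b)\); in particular \(\Theta_{v_i}=mq\) and \(\Theta_{w_j}=-2m\). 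Since \(q\) is odd, \(qa-2b\neq 0\) whenever \(a=1\), so the only possible walls come from \(a=0\) or \(a=2\), and these are harmless as shown below.

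Next I describe the subrepresentations of a representation \(M\) of \((K^m_{(2,q)},\underline{1})\) with support \(J\). A subrepresentation is given by a vertex set closed under following arrows of \(J\), i.e. if \(v_i\) lies in it, then so do all \(w_j\) joined to \(v_i\) by an arrow of \(J\). I then check the three values of \(a\) for the tree \(\Gamma(J_1,J_2)\).
\begin{itemize}
\item If \(a=0\), then \(b\geq 1\) and \(\tcan=-2mb<0\).
\item If \(a=1\), say the subrepresentation contains \(v_1\), then it contains \(J_1\), so \(b\geq \frac{q+1}{2}\) and \(\tcan\leq m(q-(q+1))=-m<0\).
\item If \(a=2\), the subrepresentation contains \(J_1\cup J_2\), i.e. all \(w_j\), so it is all of \(M\) and imposes no condition.
\end{itemize}
Thus \(M\) is \(\tcan\)-stable. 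For \(\Gamma(J_1,J_2)\) one fixes one of the \(m\) parallel arrows for each edge; stability only depends on which vertices are joined, so this choice does not matter.

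For the converse, let \(\Gamma\) be a \(\tcan\)-stable spanning tree, realized by a stable \(M\) with support \(\Gamma\). The argument has three steps.
\begin{itemize}
\item \(\Gamma\) contains no two parallel arrows, since these would form a cycle. Every arrow goes from some \(v_i\) to some \(w_j\), and \(\Gamma\) has \(q+1\) arrows because it spans \(q+2\) vertices.
\item Let \(J_i\) be the set of \(w_j\) joined to \(v_i\) in \(\Gamma\). Connectivity gives \(J_1\cup J_2=\{w_1,\dots,w_q\}\) and \(J_1\cap J_2\neq\emptyset\), since otherwise \(v_1\) and \(v_2\) lie in different components. Counting arrows, \(|J_1|+|J_2|=q+1\), so \(|J_1\cap J_2|=1\).
\item The subrepresentation generated by \(v_i\) has \(a=1\) and \(b=|J_i|\), and it is proper because it misses \(v_{3-i}\). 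Stability forces \(m(q-2|J_i|)<0\), i.e. \(|J_i|\geq\frac{q+1}{2}\). Combined with \(|J_1|+|J_2|=q+1\), this gives \(|J_1|=|J_2|=\frac{q+1}{2}\), so \(\Gamma=\Gamma(J_1,J_2)\).
\end{itemize}

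The only delicate point is making sure the subrepresentations are correctly identified as successor-closed vertex sets, with the convention that arrows go from the \(v_i\) to the \(w_j\). The stability inequality in the paper's convention is \(\theta(\underline{d}(M'))<0\) for subrepresentations \(M'\), so with this convention the sign \(\tcan(\underline{e})=m(qa-2b)\) gives the correct direction of the inequalities above.
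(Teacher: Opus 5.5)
Your proof is correct and is essentially the paper's argument. The paper simply invokes Franzen's criterion (Lemma 19 of the cited Chow ring paper), which tests \(\tcan<0\) on vertex sets closed under successors along the support; you carry out exactly that check, using \(\tcan(\underline{e})=m(qa-2b)\) for stability of \(\Gamma(J_1,J_2)\), and for the converse the subrepresentations generated by \(v_1,v_2\) together with the count \(|J_1|+|J_2|=q+1\).
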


\begin{proof}
    The proof is simply verifying the criterion given in \cite[Lemma 19]{franzen2015chow}. 
\end{proof}

Therefore, we have a point class formula for any given \(\tcan\)-stable spanning tree \(\Gamma(J_1, J_2)\),

\begin{align}
    [pt] &= \prod_{\alpha\notin \Gamma(J_1, J_2)}(c_1(\mathcal{L}_{t(\alpha)}) - c_1(\mathcal{L}_{s(\alpha)}))\notag\\ 
    &= \prod_{j\in J_1\backslash J_2}(y_j - x_1)^m(y_j - x_2)^{m-1}\prod_{j\in J_2\backslash J_1}(y_j - x_1)^{m-1}(y_j - x_2)^m (y_k - x_1)^{m-1}(y_k - x_2)^{m-1},\label{Equation:k2qm-pt}
\end{align}

where \(\{k\} = J_1\cap J_2\). On the other hand, we compute \((-K_X)^N\):

\[
(-K_X)^{N} = m^N\sum_{\substack{r_1 + \cdots + r_q + s_1 + \cdots + s_q = N\\ r_1, \dots, r_q, s_1, \dots, s_q\geq 0}}\frac{N!}{r_1!\cdots r_q!s_1!\cdots s_q!}\prod_{j = 1}^q(y_j - x_1)^{r_j}(y_j - x_2)^{s_j}.
\]

The degree \(D = \frac{(-K_X)^N}{[pt]}\) is an integer that depends on \(q\) and \(m\), thus will be written \(D(q, m)\). We introduce some notation to simplify the presentation of subsequent computations.

\begin{table}[H]
    \centering
    {
        \renewcommand{\arraystretch}{1.6}
        \begin{tabular}{|c|c|}
            \hline 
            \(P(N, 2q)\) & the set of partitions of \(N\) into \(2q\) non-negative integers\\ 
            \hline
            \(\boldpartitions{r}{s}\) & an ordered partition of the form \([r_1, \dots, r_q, s_1, \dots, s_q]\)\\ 
            \hline
            \(J_r, J_s\) & \makecell{subsets of \(\{1, \dots, q\}\) determined by \(\boldpartitions{r}{s}\), \\ \(J_r \coloneqq \{j~|~r_j\geq m\}, J_s \coloneqq \{j~|~s_j\geq m\}\)}\\ 
            \hline
            \(j_r, j_s\) & cardinalities of \(J_r\) and \(J_s\), respectively \\ 
            \hline
            \(\binom{N}{\boldpartitions{r}{s}}\) & multinomial coefficient \(\frac{N!}{r_1!\cdots r_q!s_1!\cdots s_q!}\)\\ 
            \hline 
            \(p_{\boldpartitions{r}{s}}(\underline{x}, \underline{y})\) & the product \(\prod_{j = 1}^q(y_j - x_1)^{r_j}\prod_{j = 1}^q(y_j - x_2)^{s_j}\)\\
            \hline
            \(\displaystyle \sum_{J}^{k}\) & a summation over all subsets \(J\) of \(\{1, \dots, q\}\) of size \(k\)\\ 
            \hline 
        \end{tabular}
    }
\end{table}

\begin{proposition}\label{Proposition:k2qm-deg}
    The degree formula of the Fano variety \(\modulithetastable{\ktqm}{\underline{1}}\) is 

    \begin{align}
        D(q,m) =& m^{N}\sum_{\boldpartitions{r}{s}\in \mathrm{nP}(N, 2q)}\binom{N}{[\mathbf{r}, \mathbf{s}]}\prod_{j\in J_r}\binom{r_j - m}{m-1-s_j}\prod_{j\in J_s}(-1)^{r_j + s_j + 1}\binom{s_j - m}{m-1-r_j}\notag\\ 
        &\times(-1)^{\frac{q-1}{2}-j_s}\binom{q-1-j_r-j_s}{\frac{q-1}{2}-j_r}\prod_{j\notin (J_r\cup J_s)}(-1)^{m-1-s_j}\binom{2m-2-r_j-s_j}{m-1-s_j},
    \end{align}

    where \(\mathrm{nP}(N, 2q)\) is a subset of \(P(N, 2q)\) that consists of partitions in \(P(N, 2q)\) satisfying \(J_r\cap J_s = \emptyset, j_r\leq\frac{q-1}{2},j_s\leq\frac{q-1}{2}\), and \(r_j + s_j\geq 2m-1, \text{ for all } j\in J_r\cup J_s\). 
\end{proposition}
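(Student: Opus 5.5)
The plan is to compute \(D(q,m)\) monomial by monomial in \(\mathrm{CH}^N(X)\), \(X=\modulithetastable{\ktqm}{\underline{1}}\). We use the multinomial expansion of \((-K_X)^N\) displayed before the statement, and \zcref{Theorem:point-class-via-spanning-tree} in the form \zcref{Equation:k2qm-pt}. Set \(u_j\coloneqq y_j-x_1\), \(v_j\coloneqq y_j-x_2\) and \(t\coloneqq x_2-x_1\). Then \(u_j=v_j+t\) for every \(j\), and \(\mathrm{CH}^*(X)\) is generated by \(t\) and the \(v_j\) (the linear relation removes one redundant generator). The relations become:
\begin{itemize}
\item \(u_j^mv_j^m=0\) for each \(j\);
\item \(\prod_{j\in I}u_j^m=0\) and \(\prod_{j\in I}v_j^m=0\) whenever \(|I|=\frac{q+1}{2}\).
\end{itemize}
The point class is a product of local factors \(u^mv^{m-1}\) on \(J_1\setminus J_2\), \(u^{m-1}v^m\) on \(J_2\setminus J_1\), and \(u^{m-1}v^{m-1}\) at the index \(k\). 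We must show that each \(p_{[\mathbf r,\mathbf s]}=\prod_j u_j^{r_j}v_j^{s_j}\) equals \([pt]\) times the displayed product of signed binomials, and that this product vanishes outside \(\mathrm{nP}(N,2q)\).

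\emph{Step 1 (indices with a large exponent).} Suppose \(r_j\ge m\). Write \(u_j^{r_j}v_j^{s_j}=u_j^mv_j^{s_j}(v_j+t)^{r_j-m}\) and expand. Every term with \(v_j\)-exponent \(\ge m\) dies by \(T^{(1)}_j\). I will then push the remaining \(v_j\)-powers up to exactly \(m-1\), trading powers of \(t\), so that the surviving contribution is \(\binom{r_j-m}{m-1-s_j}\,u_j^mv_j^{m-1}t^{\,r_j+s_j-2m+1}\). This needs \(r_j+s_j\ge 2m-1\); otherwise the term vanishes. The case \(s_j\ge m\) is symmetric via \(v_j=u_j-t\), which produces the sign \((-1)^{r_j+s_j+1}\).

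Both \(r_j\ge m\) and \(s_j\ge m\) at once is killed by \(T^{(1)}_j\), which forces \(J_r\cap J_s=\emptyset\). The bounds \(j_r,j_s\le\frac{q-1}{2}\) come from \(T^{(2)}\) and \(T^{(3)}\). After this step the monomial becomes a global power \(t^E\), with \(E=\sum_{j\in J_r\cup J_s}(r_j+s_j-2m+1)\), times top local factors on \(J_r\cup J_s\) and the untouched factors \(u_j^{r_j}v_j^{s_j}\) on the remaining indices.

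\emph{Step 2 (small indices).} The \(t^E\) must be absorbed by the indices \(j\notin J_r\cup J_s\), where \(r_j,s_j<m\). By degree count, exactly \(c_j=2m-2-r_j-s_j\) powers of \(t\) (or one more) go to index \(j\). Writing \(t^{c_j}=(u_j-v_j)^{c_j}\), the coefficient of \(u_j^{m-1}v_j^{m-1}\) is \((-1)^{m-1-s_j}\binom{2m-2-r_j-s_j}{m-1-s_j}\), which is the last factor in the formula.

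The remaining \(q-1-j_r-j_s\) surplus powers of \(t\) are spent one each on all but one of the free indices. Each such power upgrades \(u^{m-1}v^{m-1}\) to either \(u^mv^{m-1}\) or \(-u^{m-1}v^m\), since \(t\,u^{m-1}v^{m-1}=u^mv^{m-1}-u^{m-1}v^m\). So we get a signed sum over ways to choose \(\frac{q-1}{2}-j_r\) further \(u\)-type indices and \(\frac{q-1}{2}-j_s\) \(v\)-type indices. Each resulting configuration is a point class \(\Gamma(J_1,J_2)\)-monomial, and all of these equal \([pt]\) by \zcref{Theorem:point-class-via-spanning-tree}. This yields \((-1)^{\frac{q-1}{2}-j_s}\binom{q-1-j_r-j_s}{\frac{q-1}{2}-j_r}\); configurations with too many indices of one type vanish by \(T^{(2)}\) and \(T^{(3)}\).

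\emph{Main obstacle.} The key difficulty is making the bookkeeping in Steps 1 and 2 rigorous. Powers of \(t\) are global, and products such as \(u_j^{m+1}v_j^{a}\) are not locally zero, so "pushing exponents to the top" is not a purely local operation. My first attempt is to prove a lemma in the quotient ring. The lemma should show that every monomial \(t^e\prod_j u_j^{a_j}v_j^{b_j}\) of degree \(N\) with all \(a_j,b_j\le m\) equals the explicit combination above. I would prove it by induction on \(e\), using \(t=u_j-v_j\) at a cleverly chosen index together with the vanishing relations. I expect this to be the technical heart of the argument; the final check is that the resulting sum is independent of the chosen spanning tree.
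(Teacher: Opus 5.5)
Your setup and all the local coefficients match the paper: the substitution \(u_j=v_j+t\), the factors \(\binom{r_j-m}{m-1-s_j}\), \((-1)^{r_j+s_j+1}\binom{s_j-m}{m-1-r_j}\) and \((-1)^{m-1-s_j}\binom{2m-2-r_j-s_j}{m-1-s_j}\), and the count \((-1)^{\frac{q-1}{2}-j_s}\binom{q-1-j_r-j_s}{\frac{q-1}{2}-j_r}\). However, the proof is missing exactly where you locate the "technical heart". Two claims are asserted rather than proved.

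\emph{Step 1.} You claim that among the terms \(u_j^m v_j^{s_j+a}t^{r_j-m-a}\), \(0\le a\le m-1-s_j\), only \(a=m-1-s_j\) contributes, and in particular that everything dies when \(r_j+s_j<2m-1\). This is the paper's Case 3. No relation at index \(j\) kills \(u_j^mv_j^{a'}\) with \(a'<m-1\); the vanishing is global and depends on where the remaining powers of \(t\) can go.

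\emph{Step 2.} You claim that "by degree count" \(c_j\) or \(c_j+1\) powers of \(t\) go to each free index, and that the other terms of \((u_j-v_j)^{c_j}u_j^{r_j}v_j^{s_j}\), e.g. \(u_j^mv_j^{m-2}\), can be discarded. But \(t\) is not attached to any index, so no such distribution exists. Read as a genuine choice it even overcounts: you would also sum over which free index stays at \((m-1,m-1)\), gaining a factor \(q-j_r-j_s\). For instance, with \(q=3\), \(m=1\) one has \(u_1^2=-[pt]\), not \(-2[pt]\). The discarded terms do cancel in the end, but only after the extra \(t\) is applied and one knows that all full factors of the same type are equal, which is again a global fact. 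The deferred "lemma by induction on \(e\) at a cleverly chosen index" is precisely the argument that is needed, and it is not supplied.

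The paper fills this with a fixed expansion order.
\begin{itemize}
\item Fix one free index \(k\) in advance. At every other free index \(j\), expand \(t^{c_j+1}=(u_j-v_j)^{c_j+1}\). Each such index then has total exponent \(2m-1\), hence exactly one exponent \(\ge m\), so it joins exactly one of \(J_{r'},J_{s'}\).
\item The relations \(T^{(2)},T^{(3)}\) kill everything except \(j'_r=j'_s=\frac{q-1}{2}\). This in turn forces the term \(u_k^{m-1}v_k^{m-1}\) at \(k\).
\item The number of powers of \(t\) still left is \(\sum_{j\in J_r}(m-1-s'_j)+\sum_{j\in J_s}(m-1-r'_j)\). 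If it is positive, one more factor \(t=u_k-v_k\) pushes \(j'_r\) or \(j'_s\) to \(\frac{q+1}{2}\) and kills the term. This proves your Step 1 claim and Case 3 at once.
\item Finally, all same-type full terms at \(j\ne k\) are identified with \(u_j^mv_j^{m-1}\) (resp.\ \(u_j^{m-1}v_j^m\)). Summing them with the alternating shifted binomial identity (Appendix~A) gives \((-1)^{m-1-s_j}\binom{2m-2-r_j-s_j}{m-1-s_j}\), with an extra \(-1\) for the \(v\)-type.
\end{itemize}

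That identification, and the "every index sits at its top exponent" rigidity your heuristics rely on, can be justified cleanly. For fixed disjoint \(J'_r,J'_s\) of size \(\frac{q-1}{2}\), multiplication by \(\prod_{j\in J'_r}u_j^m\prod_{j\in J'_s}v_j^m\) kills \(u_k^m\), \(v_k^m\), \(v_j^m\) (\(j\in J'_r\)) and \(u_j^m\) (\(j\in J'_s\)). It therefore factors through a tensor product of truncated polynomial rings, with \(t=u_k-v_k\), whose top degree is one-dimensional.
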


\begin{proof}
    We compute integers \(D_{\boldpartitions{r}{s}}\) such that \(p_{\boldpartitions{r}{s}}(\underline{x}, \underline{y}) = D_{\boldpartitions{r}{s}} [pt]\). The way to achieve this is to reduce each \(p_{\boldpartitions{r}{s}}(\underline{x}, \underline{y})\) via tautological relations to the form of \zcref{Equation:k2qm-pt} for some \(J_1, J_2\), and \(k\in J_1\cap J_2\). We analyze \(\boldpartitions{r}{s}\) in three cases, each of which is considered when all previous cases fail. 

    \textbf{Case 1:} \(J_r\cap J_s\neq \emptyset\).

    There exists at least one \(j\in J_r\cap J_s\), i.e. \(r_j, s_j\geq m\). The tautological relation \(T_j^{(1)}\) divides \(p_{\boldpartitions{r}{s}}(\underline{x}, \underline{y})\), hence \(D_{\boldpartitions{r}{s}} = 0\). 

    \textbf{Case 2:} \(j_r > \frac{q-1}{2}\) or \(j_s > \frac{q-1}{2}\).

    This means the tautological relation \(T_{J_r}^{(2)}\) or \(T_{J_s}^{(3)}\) divides \(p_{\boldpartitions{r}{s}}(\underline{x}, \underline{y})\), hence \(D_{\boldpartitions{r}{s}} = 0\). 

    \textbf{Case 3:} \(r_j + s_j < 2m-1\) for some \(j\in J_r\cup J_s\). 

    Without loss of generality, we only consider the case \(j\in J_r\) and \(r_j + s_j < 2m-1\). By applying the substitution \(y_j - x_1 = (y_j - x_2) + (x_2 - x_1)\) to the factor \((y_j - x_1)^{r_j - m}\) and expanding it, we move extra exponents on \((y_j - x_1)\) to \((x_2 - x_1)\) and \((y_j - x_2)\). After performing such expansions for all \(j\in J_r\cup J_s\), \(p_{\boldpartitions{r}{s}}(\underline{x}, \underline{y})\) becomes a \(\mathbb{Z}\)-linear combination of terms of the form \(p_{[\mathbf{r}', \mathbf{s}']}(\underline{x}, \underline{y})(x_2-x_1)^{e(\boldpartitions{r'}{s'})}\), where each \([\mathbf{r}', \mathbf{s}']\) satisfies 
    
    \[
    \begin{cases}
        r'_j = m, s_j\leq s'_j\leq m-1 & \text{if }j\in J_r\\ 
        r_j\leq r'_j\leq m-1, s'_j = m & \text{if }j\in J_s\\ 
        r'_j = r_j, s'_j = s_j & \text{if }j\in \{1, \dots, q\}\backslash (J_r\cup J_s),
    \end{cases}
    \]

    and \(e(\boldpartitions{r'}{s'}) = \sum_{j\in J_r}(r_j + s_j - m - s'_j) + \sum_{j\in J_s}(r_j + s_j - m - r'_j)\). 

    For every partition \(\boldpartitions{r'}{s'}\), we have \(J_{r'} = J_r\) and \(J_{s'} = J_s\). Remember that we try to convert \(p_{\boldpartitions{r}{s}}(\underline{x}, \underline{y})\) into a multiple of \([pt]\). In other words, we enlarge \(J_r\) and \(J_s\) until they both have size \(\frac{q-1}{2}\) and \(r_j + s_j = 2m-1, \text{ for all } j\in J_r\cup J_s\), and leave one index \(k\notin J_r\cup J_s\) at which we have a factor \((y_k - x_1)^{m-1}(y_k - x_2)^{m-1}\). 

    Our strategy is to first choose some \(k\notin J_r\cup J_s\) and expand the factor \((x_2 - x_1)^{2m-2-r_k-s_k}\) in \((x_2-x_1)^{e(\boldpartitions{r'}{s'})}\) as
    
    \begin{align*}
        &(x_2 - y_k + y_k - x_1)^{2m-2-r_k-s_k} \\ 
        =& \sum_{k_1+k_2=2m-2-r_k-s_k}\binom{2m-2-r_k-s_k}{k_1, k_2}(-1)^{k_1}(y_k - x_2)^{k_1}(y_k - x_1)^{k_2}.
    \end{align*}

    Each term in the expansion adds numbers into \(\boldpartitions{r'}{s'}\), among which only the term with \(k_1 = m-1-r_k\) and \(k_2 = m-1-s_k\) does not change \(J_{r'}\) and \(J_{s'}\). Other terms either adds 1 to \(j'_r\) or \(j'_s\). Now we expand factors \((x_2 - y_j + y_j - x_1)^{2m-1-r_j-s_j}\) for each \(j\notin J_r\cup J_s\cup \{k\}\), at which we then have either \(r_j+k_1\geq m, s_j+k_2\leq m-1\) or \(r_j+k_1\leq m-1, s_j+k_2\geq m\) for \(k_1 + k_2 = 2m-1-r_j-s_j\). Therefore, after all expansions at \(j\notin J_r\cup J_s\cup\{k\}\), \(\{1, \dots, q\}\backslash\{k\}\) is the disjoint union of updated \(J_{r'}\) and \(J_{s'}\) for each \(p_{[\mathbf{r}', \mathbf{s}']}(\underline{x}, \underline{y})(x_2 - x_1)^{e(\boldpartitions{r'}{s'})}\) in the expansion. 

    If \(j'_r > \frac{q-1}{2}\) or \(j'_s > \frac{q-1}{2}\), it falls back to \textbf{Case 2}. So the only possibly non-vanishing term must have \(j'_r = j'_s = \frac{q-1}{2}\). We consider for fixed \(J_{r'}, J_{s'}\), the factors \((y_k - x_2)^{k_1}(y_k - x_1)^{k_2}\) from the first expansion. When \((k_1, k_2) \neq (m-1-r_k, m-1-s_k)\), we either have \(r_k+k_1\geq m\) or \(s_k+k_2\geq m\), which adds 1 to either \(j'_r\) or \(j'_s\), and results in them vanishing. 

    Notice that we have an equality \(e(\boldpartitions{r'}{s'}) + \sum_{j=1}^q (r'_j + s'_j) = N\). In the above process, we subtract \(\sum_{j\notin J_r\cup J_s}(2m-1-r_j-s_j)-1\) from \(e(\boldpartitions{r'}{s'})\). Since we assumed \(r_j + s_j < 2m-1\) for some \(j\in J_r\cup J_s\) in \textbf{Case 3}, we have 

    \begin{align*}
        &e(\boldpartitions{r'}{s'}) - \sum_{j\notin J_r\cup J_s}(2m-1-r_j-s_j) + 1\\ 
        =&\sum_{j\in J_r}(r_j + s_j - m - s'_j) + \sum_{j\in J_s}(r_j + s_j - m - r'_j) + \sum_{j\notin J_r\cup J_s} (r_j + s_j) - (2m-1)(q-j_r - j_s)+1\\ 
        =&\sum_{j = 1}^{q}(r_j + s_j) - q(2m-1)+(m-1)(j_r + j_s) - \sum_{j\in J_r}s'_j - \sum_{j\in J_s}r'_j+1\\ 
        =& (m-1)(j_r + j_s) - \sum_{j\in J_r}s'_j - \sum_{j\in J_s}r'_j \geq 1.
    \end{align*}

    This means the only possibly non-vanishing term still has a factor \((x_2 - x_1)\). We then expand it again as \(x_2 - y_k + y_k - x_1\), which results in either \(j'_r=\frac{q+1}{2}\) or \(j'_s = \frac{q+1}{2}\). Thus, all terms in \textbf{Case 3} vanish. 
    
    Below is an illustration for this process in the case \(q = 3, m = 2\) for \(\boldpartitions{r}{s} = [2,1,1;0,3,1]\). 

    \[\begin{tikzcd}
        & \begin{array}{c} \begin{bmatrix}2,1,1\\ 0,3,1\end{bmatrix}\\ J_r = \{1\}, J_s = \{2\} \end{array} & \\
        \begin{array}{c} \begin{bmatrix}2,1,1\\ 0,2,2\end{bmatrix}\\ J_r = \{1\}, J_s = \{2,3\}\\ \text{vanish} \end{array} & \begin{array}{c} \begin{bmatrix}2,1,1\\ 0,2,1\end{bmatrix}(x_2 - x_1)\\ J_r = \{1\}, J_s = \{2\} \end{array} & \begin{array}{c} \begin{bmatrix}2,1,2\\ 0,3,1\end{bmatrix}\\ J_r = \{1,3\}, J_s = \{2\}\\ \text{vanish} \end{array}
        \arrow["\text{choose }k=3",from=1-2, to=2-2]
        \arrow[from=2-2, to=2-1]
        \arrow[from=2-2, to=2-3]
    \end{tikzcd}\]

    \textbf{Non-vanishing terms}: Now we exclude all three cases and arrive at \(\mathrm{nP}(N, 2q)\) in the proposition. We compute the coefficient of each term in the expansion of \(p_{\boldpartitions{r}{s}}(\underline{x}, \underline{y})\). At each \(j\notin J_r\cup J_s\cup \{k\}\), the expansion gives a summation 

    \[\sum_{k_{1j} = m-r_j}^{2m-1-r_j-s_j}(-1)^{1+r_j+s_j-k_{1j}}\binom{2m-1-r_j-s_j}{k_{1j}} \text{ or } \sum_{k_{2j}=m-s_j}^{2m-1-r_j-s_j}(-1)^{k_{2j}}\binom{2m-1-r_j-s_j}{k_{2j}}.\]

    With \zcref{Identity:alt-shifted-binom-sum}, they simplify to \((-1)^{m-1-s_j}\binom{2m-2-r_j-s_j}{m-1-r_j}\) and \((-1)^{m-1-r_j}\binom{2m-2-r_j-s_j}{m-1-s_j}\) respectively. We thus have 

    \[
    D_{[\mathbf{r}, \mathbf{s}]} = (-1)^{\frac{q-1}{2}-j_s}\binom{q-1-j_r-j_s}{\frac{q-1}{2}-j_s}\prod_{j\notin (J_r\cup J_s)}(-1)^{m-1-s_j}\binom{2m-2-r_j-s_j}{m-1-s_j}, \text{ for all } \boldpartitions{r}{s}\in \mathrm{nP}(N, 2q).
    \]

    Together with the multinomial coefficients, we have the desired form of \(D(q, m)\) in the proposition. 

\end{proof}

\begin{example}
    Let \(q = 1\). The moduli space \( \modulithetastable{\ktqm}{\underline{1}}\) is precisely \(\mathbb{P}^{m-1}\times \mathbb{P}^{m-1}\). Then we obtain the degree formula by plugging \(q = 1\) into \zcref{Proposition:k2qm-deg}. In the summation, we have \(j_r = j_s = 0, N = 2m-2 = r_1 + s_1\). The only possibility is \(r_1 = s_1 = m-1\). Hence, \(D(1, m) = \frac{(2m-2)!}{(m-1)!(m-1)!}\).
\end{example}

\begin{example}\label{Example:k2q1-deg}
    Let \(m = 1\). All binomial coefficients involving \(m\) and \(m-1\) evaluate to 1. For each \(j\in J_s\), we have \(r_j\leq m-1=0\), and \(\prod_{j\in J_s}(-1)^{r_j + s_j + 1} = (-1)^{\sum_{j\in J_s}(s_j + 1)} = (-1)^{\sum_{j\in J_s}s_j}(-1)^{j_s}\). Therefore, we obtain

    \[
    D(q, 1) = \sum_{[\mathbf{r}, \mathbf{s}]\in \mathrm{nP}(q-1, 2q)}\binom{q-1}{[\mathbf{r}, \mathbf{s}]} (-1)^{\frac{q-1}{2}}(-1)^{\sum_{j\in J_s}s_j}\binom{q-1-j_r-j_s}{\frac{q-1}{2}-j_s}.
    \]

    The value of each summand depends only on \(j_r\) and \(j_s\). The formula can be written as 

    \begin{align*}
        D(q, 1) &= \sum_{\substack{0\leq j_r\leq \frac{q-1}{2}\\0\leq j_s\leq \frac{q-1}{2}}}\binom{q}{j_r, j_s, q-j_r-j_s}\sum_{r + s = q-1}(-1)^{s + \frac{q-1}{2}}\binom{q-1}{r, s}\binom{q-1-j_r-j_s}{\frac{q-1}{2}-j_r}\\ 
        &\times\sum_{\substack{\sum_{j\in J_r}r_j=r\\ \sum_{j\in J_s}s_j=s}}\frac{r!}{\prod_{j\in J_r}r_j!}\frac{s!}{\prod_{j\in J_s}s_j!}.
    \end{align*}

    The last summation can be computed using inclusion-exclusion principle and written in terms of Stirling numbers of the second kind. 

    \[
    \sum_{\substack{\sum_{j\in J_r}r_j=r\\ \sum_{j\in J_s}s_j=s}}\frac{r!}{\prod_{j\in J_r}r_j!}\frac{s!}{\prod_{j\in J_s}s_j!} = \begin{Bmatrix}
        r\\j_r
    \end{Bmatrix}j_r!\begin{Bmatrix}
        s\\j_s
    \end{Bmatrix}j_s!
    \]

    Denote by \(E(j_r, j_s, q-1)\) the summation \(\sum_{r+s = q-1}(-1)^s\binom{q-1}{r, s}\begin{Bmatrix}
        r\\j_r
    \end{Bmatrix}j_r!\begin{Bmatrix}
        s\\j_s
    \end{Bmatrix}j_s!\). Then the degree formula becomes 

    \[
    D(q, 1) = \sum_{\substack{0\leq j_r\leq \frac{q-1}{2}\\0\leq j_s\leq \frac{q-1}{2}}}\binom{q}{j_r, j_s, q-j_r-j_s}\binom{q-1-j_r-j_s}{\frac{q-1}{2}-j_r}E(j_r, j_s, q-1).
    \]

    Some values of the degree are listed below.

        \begin{table}[H]
            \centering
            \begin{tabular}{|l|l|l|l|l|l|l|l|}
            \hline
            \(q\)    & 1 & 3 & 5   & 7     & 9       & 11  & 13        \\
            \hline
            \(D(q)\) & 1 & 6 & 230 & 23548 & 4675014 & 1527092468 & 743288515164\\ 
            \hline
            \end{tabular}
        \end{table}

\end{example}

We computed \(D(q, 1)\) for \(q\) up to 901 and discovered the following. 

\begin{conjecture}\label{Conjecture:deg-macmahon}
    The equality below holds for all odd \(q\geq 1\).

    \[
    D(q, 1) = \sum_{i = 0}^{\frac{q-1}{2}}(-1)^{\frac{q-1}{2}-i}\binom{q}{\frac{q-1}{2}}(2i + 1)^{q-1}
    \]
\end{conjecture}

\begin{remark}
    An AI--assisted proof is given in \zcref{Proposition:macmahon-conj}. We have verified the argument.
\end{remark}

\subsection{Identities in Chow groups of \texorpdfstring{\(\modulithetastable{\ktqm}{\underline{1}}\)}{M(K2qm, 1)}}\label{Subsection:4-2}

In this subsection, we study in detail relations in \(\mathrm{CH}^*(\modulithetastable{\ktqm}{\underline{1}})\) and work out some important identities for later use. 

\begin{proposition}\label{Proposition:k2qm-chow-eqs}
    For each \(i = 0, \dots, \frac{q-1}{2}\) and \(I\subset\{1, \dots, q\}\) with \(|I| = \frac{q+1}{2} + i\), we have the following equation in \(\mathrm{CH}^{(\frac{q+1}{2}+i)m}_{\mathbb{Q}}(\modulithetastable{\ktqm}{\underline{1}})\):

    \begin{align*}
        &m^{2i+1}\sum_{J\subset I}^{\frac{q-1}{2}-i}\prod_{j\in J}(y_j - x_1)^m\prod_{j\in I\backslash J}(y_j - x_1)^{m-1}(x_2 - x_1)^{2i+1} \\ 
        =& \sum_{\substack{l_j\leq m, j\in I\\ |\{j\in I~|~l_j=m\}|\leq \frac{q-1}{2}-i\\ |\{j\in I~|~l_j=m-1\}|\leq 2i+1}}(-1)^{\sum_{j\in I}l_j}\prod_{j\in I}\binom{m}{l_j}(y_j - x_1)^{l_j}(x_2 - x_1)^{(\frac{q+1}{2}+i)m-\sum_{j\in I}l_j}
    \end{align*}
\end{proposition}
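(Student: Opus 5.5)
Throughout, I would substitute \(u_j\coloneqq y_j-x_1\) and \(t\coloneqq x_2-x_1\), so that \(y_j-x_2=u_j-t\). In these variables the relations of \zcref{Theorem:chow-quiver-pre} for \(\ktqm\) read
\[
u_j^m(u_j-t)^m=0,\qquad \prod_{j\in K}u_j^m=0,\qquad \prod_{j\in K}(u_j-t)^m=0
\]
for \(|K|=\frac{q+1}{2}\). Both sides of the claim are polynomials in \(u_j\) (\(j\in I\)) and \(t\) only. The observation that drives the plan is that the summand on the right-hand side is a term of the binomial expansion
\[
\prod_{j\in I}(t-u_j)^m=\sum_{l_j\le m}(-1)^{\sum l_j}\prod_{j\in I}\binom{m}{l_j}u_j^{l_j}\,t^{|I|m-\sum l_j}.
\]
Since \(|I|\geq \frac{q+1}{2}\), this product is divisible by some \(T^{(3)}_K\) and therefore vanishes. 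Hence the right-hand side equals minus the sum of the \emph{excluded} terms, namely those with more than \(\frac{q-1}{2}-i\) indices having \(l_j=m\), or more than \(2i+1\) indices having \(l_j=m-1\).

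For each \(j\in I\) I would split \((t-u_j)^m=A_j+B_j+C_j\), where
\[
C_j=(-u_j)^m,\qquad B_j=(-1)^{m-1}m\,u_j^{m-1}t,
\]
and \(A_j\) collects the terms with \(t\)-power at least \(2\). Expanding \(\prod_{j\in I}(A_j+B_j+C_j)\) and grouping by the sets \(J_C, J_B, J_A\) that record which factor is chosen at each index gives a finite sum. Any term with \(|J_C|\geq\frac{q+1}{2}\) vanishes by \(T^{(2)}\). The term with \(|J_C|=\frac{q-1}{2}-i\) and \(|J_B|=2i+1\), hence \(J_A=\emptyset\), is exactly \(\pm\)LHS. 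A sign check gives \((-1)^{m|I|-(2i+1)}\), so this term is \(-(-1)^{m|I|}\cdot\)LHS. The statement is therefore equivalent to an identity saying that the excluded terms with \(\frac{q-1}{2}-i<|J_C|\le\frac{q-1}{2}\), together with those having \(|J_B|>2i+1\), add up (with the sign bookkeeping above) to the required multiple of LHS.

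To establish that identity, I would argue by descending induction on \(i\).
\begin{itemize}
\item In the base case \(i=\frac{q-1}{2}\) we have \(I=\{1,\dots,q\}\) and \(J\) ranges only over \(\emptyset\). The claim then compares \(m^q\prod_j u_j^{m-1}t^q\) with the sum over \(l_j\le m\) having no \(l_j=m\), and I would check it directly from the vanishing of \(\prod_{j\in I}(t-u_j)^m\).
\item For the inductive step at level \(i\), I would handle each excluded term with \(|J_C|=\frac{q-1}{2}-i'\), for \(i'<i\), by factoring out \(\prod_{j\in J_C}u_j^m\). The remaining factor is supported on a set of size \(\frac{q+1}{2}+i'\), to which the identity for \(i'\) applies after restricting to the complementary variables. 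This produces multiples of terms of the LHS shape at level \(i\).
\item Terms with \(|J_B|>2i+1\) would be reorganised using \(T^{(1)}\), in the form \(u_j^m(u_j-t)^m=0\), to trade surplus \(u_j^{m-1}t\) factors against ones of the form \(u_j^m\).
\end{itemize}
Summing over subsets \(K\subset I\) of size \(\frac{q+1}{2}\) of \(T^{(3)}_K\) times complementary factors, in inclusion--exclusion fashion, should organise this bookkeeping.

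I expect the main obstacle to be the combinatorial cancellation in the inductive step: the multiplicities coming from the various \(T^{(3)}_K\), and from reapplying the lower-level identities, have to collapse to exactly the factor \(m^{2i+1}\) and the sum over \(J\subset I\) with \(|J|=\frac{q-1}{2}-i\). My first attempt would be to verify the cases \(q=3\) and \(q=5\) with small \(m\) by hand, to identify which binomial identity (presumably \zcref{Identity:alt-shifted-binom-sum}, an alternating shifted binomial sum) performs the collapse.
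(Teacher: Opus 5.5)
\paragraph{The gap.} The gap is the step you flag as the main obstacle, and none of the tools you propose for it works. Factoring out \(\prod_{j\in J_C}u_j^m\) with \(|J_C|=\frac{q-1}{2}-i'\) leaves \(|I\setminus J_C|=1+i+i'\) indices, not \(\frac{q+1}{2}+i'\). So the level-\(i'\) identity does not apply to the complement, and for \(i+i'<\frac{q-1}{2}\) the complement is too small to carry any \(T^{(3)}\)-relation. The induction also runs in the wrong direction: it descends from \(i=\frac{q-1}{2}\), yet the step calls on levels \(i'<i\). Finally, \(T^{(1)}_j=u_j^m(u_j-t)^m\) only produces monomials with \(u_j\)-exponent at least \(m\). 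It therefore cannot trade \(u_j^{m-1}t\) for \(u_j^m\).

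The paper does the opposite and factors out a \emph{small} set. For \(J\subset I\) with \(|J|=k\le i\) one has \(|I\setminus J|\ge\frac{q+1}{2}\), so \(\prod_{j\in J}u_j^m\prod_{j\in I\setminus J}(u_j-t)^m\in I_{\text{taut}}\). One then takes the single combination
\[
S^I_{(i)}=\sum_{k=0}^{i}(-1)^k\binom{\frac{q-1}{2}-k}{i-k}\sum_{J\subset I,\,|J|=k}\prod_{j\in J}u_j^m\prod_{j\in I\setminus J}(u_j-t)^m .
\]
Take a monomial \(\prod_j u_j^{l_j}t^e\) with \(P=\{j: l_j=m\}\) and \(|P|=p\). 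It receives the same coefficient \(\prod_{j\notin P}\binom{m}{l_j}(-1)^{m-l_j}\) from every \(J\subset P\), so its total coefficient is that product times \(c(p)=\sum_k(-1)^k\binom{\frac{q-1}{2}-k}{i-k}\binom{p}{k}\). By \zcref{Identity:alt-binom-prod-sum} (not the shifted sum you guessed), \(c(p)=0\) for \(\frac{q+1}{2}-i\le p\le\frac{q-1}{2}\). Monomials with \(p\ge\frac{q+1}{2}\) die by \(T^{(2)}\). That is the whole argument: no induction and no \(T^{(1)}\) are needed.

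\paragraph{The target as printed.} You should not aim at the display literally. The same generating-function computation gives \(c(p)=\binom{\frac{q-1}{2}-p}{i}\), not \(1\), for \(p\le\frac{q-1}{2}-i\). The terms with more than \(2i+1\) exponents equal to \(m-1\) also survive. This weighted form is what \zcref{Lemma:point-config-alt-prod-sum-eq} actually uses for \(m=1\). The displayed right-hand side drops the weight and those terms, and so does the coefficient asserted in the paper's proof.

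For instance, take \(q=5\), \(m=1\), \(i=1\). The display literally reads \(t^3\sum_{j\in I}u_j=-t^3\sum_{j\in I}u_j\). In fact \(S^I_{(1)}=2t^4-t^3\sum_{j\in I}u_j=0\), and one checks from the relations that \(t^4=6[pt]\neq 0\). So the identity you reduced the statement to is false as printed, and no bookkeeping of the excluded terms can produce it.

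The correct target is \(S^I_{(i)}=0\). It says that the left-hand side equals \((-1)^{m|I|}\) times a weighted sum of the printed summands. Each summand is multiplied by \(\binom{\frac{q-1}{2}-p}{i}\). The sum runs over all \((l_j)_{j\in I}\) with \(p\le\frac{q-1}{2}-i\), except those reproducing the left-hand side, namely \(p=\frac{q-1}{2}-i\) and \(l_j=m-1\) for \(j\notin P\).
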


\begin{proof}
    The case for \(i=0\) is just the expansion of \(T_I^{(3)}\). For a fixed \(i\in \{1, \dots, \frac{q-1}{2}\}\), we consider the following sum 

    \[
    S_{(i)}^I\coloneqq \sum_{k = 0}^i(-1)^k\binom{\frac{q-1}{2}-k}{i-k}\sum_{J\subset I}^k\prod_{j\in J}(y_j - x_1)^m\prod_{j\in I\backslash J}(y_j - x_2)^m.
    \]

    It is not hard to see that \(S_{(i)}^I\in I_{\text{taut}}\). 

    Without loss of generality, we always take \(I = \{1, \dots, \frac{q+1}{2} + i\}\). Notice that we can write \(y_j - x_2 = (y_j - x_1) - (x_2 - x_1)\). Therefore, after expanding powers of \((y_j - x_2)\) into powers of \((y_j - x_1)\) and \((x_2 - x_1)\), \(S_{(i)}^I\) can be written as a combination of terms of the form \(\prod_{j\in I}(y_j - x_1)^{l_j}(x_2 - x_1)^{e((l_j)_{j\in I})}\). There is no need to specify \(e((l_j)_{j\in I})\) since it always complements the degree so that the term lives in \(\mathrm{CH}^{(\frac{q+1}{2}+i)m}\). If \(\#\{j\in I~|~l_j=m\}\geq \frac{q+1}{2}\), then this term is in \(I_{\text{taut}}\). 
    
    Now consider a term \(\prod_{j\in J}(y_j - x_1)^m\prod_{j\in I\backslash J}(y_j - x_1)^{l_j}(x_2 - x_1)^e\), where \(|J|\in \{\frac{q+1}{2}-i, \dots, \frac{q-1}{2}\}\) and \(l_j < m, \text{ for all } j\in I\backslash J\). The coefficient of this term in \(S_{(i)}^I\) is 

    \[
    \prod_{j\in I\backslash J}\binom{m}{l_j}\sum_{k = 0}^i(-1)^k\binom{\frac{q-1}{2}-k}{i-k}\binom{\frac{q+1}{2}-j}{k}, 
    \]

    which equals 0 by \zcref{Identity:alt-binom-prod-sum}. As for a term with \(|J| < \frac{q+1}{2} - i\), the coefficient is \((-1)^{\sum_{j\in I}l_j}\prod_{j\in I}\binom{m}{l_j}\). Then the equation in the proposition is precisely \(0 = S_{(i)}^I\) after removing terms with coefficient 0 and moving the leading term to the left. 
\end{proof}

\subsection{Degrees of point configuration spaces}\label{Subsection:4-3}

Let \(q\geq 5\) be an odd integer, and \(S_q\) be the subspace quiver with vertices \(\{v_1, \dots, v_q, w\}\) and 1 arrow from \(v_i\) to \(w\) for all \(i = 1, \dots, q\). Consider the dimension vector \(\underline{d}\) that has 1 for each \(v_i\) and 2 for \(w\), which we will abbreviate as \((1^q;2)\) in the rest of this subsection. Then the moduli space of \(\tcan\)-stable representations parametrizes ordered \(q\)-point configurations on \(\mathbb{P}^1\) up to the \(PGL_2\)-action, where no more than \(\frac{q-1}{2}\) points coincide. The \(\tcan\)-strongly amply stable condition has been verified in \cite[Section 5.1]{frRe2021fano}. Moreover, the covering quiver \(\tilde{S}_q\) of \(S_q\) with respect to \(\underline{d}\) is precisely the opposite quiver of \(\ktqo\). By \zcref{Corollary:toric-red-degree}, the degree equation for \(X\coloneqq \modulithetastable{S_q}{\underline{d}}\) is equivalent to \((-K_{\tilde{X}})^{q-3}\Delta = D\bar{\rho}^{-1}([pt])\). In this case, the discriminant \(\Delta\) is simply \(x_2 - x_1\). 

Since \((\mathrm{CH}^{q-2}(\tilde{X}))^{\text{anti}}\simeq \mathrm{CH}^{q-3}(X)\) are both 1-dimensional over \(\mathbb{Q}\) and \(\Delta^{q-2}\) is anti-invariant under the \(W_{\underline{d}}\) action, it is immediate that we can write 

\begin{equation}
    (-K_{\tilde{X}})^{q-3}\Delta = q_1\Delta^{q-2}, \bar{\rho}^{-1}([pt]) = q_2\Delta^{q-2}
    \label{Equation:pt-config-deg-def}
\end{equation}

for some \(q_1, q_2\in \mathbb{Q}\). Then the degree is simply \(D = q_1 / q_2\). 

We start our computation with an alternative formula for \(\bar{\rho}^{-1}([pt])\). 

\begin{proposition}\label{Proposition:point-config-pt-toric}
    Let \(x_1, x_2, y_1, \dots, y_q\) be the generators of \(\mathrm{CH}^*(\tilde{X})\) as above. Then 

    \[
    \bar{\rho}^{-1}([pt]) = \frac{\Delta}{2^{q-3}}\sum_{i = 0}^{\frac{q-3}{2}}(i+1)\sum_J^{q-3-2i}\prod_{j\in J}(y_j - x_1 + y_j - x_2)\Delta^{2i}.
    \]
\end{proposition}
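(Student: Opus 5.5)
The plan is to start from the Belmans--Franzen point class formula \eqref{Equation:quiver-moduli-pt} for \(X=\modulithetastable{S_q}{(1^q;2)}\), transport it to \(\mathrm{CH}^*(\tilde X)\) with \(\rho^{-1}\), and then simplify using the tautological relations of \(\tilde X\cong\modulithetastable{\ktqo}{\underline 1}\) (via \zcref{Lemma:quiver-op-iso}).

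\textbf{Step 1: the point class in \(X\) and its image.} Each arrow goes \(v_j\to w\) with \(d_{v_j}=1\), so \eqref{Equation:quiver-moduli-pt} gives
\[
[pt]=\frac{c(\mathcal V_w)^{q}}{c(\mathcal V_w)^2\prod_{j}c(\mathcal L_{v_j})}\Big|_{q-3}=\frac{c(\mathcal V_w)^{q-2}}{\prod_{j=1}^q c(\mathcal L_{v_j})}\Big|_{q-3}.
\]
Applying \(\rho^{-1}\) as in \eqref{Equation:rho-bar-inv-def} multiplies by \(\Delta=x_2-x_1\). It also replaces \(c(\mathcal V_w)\) by \((1+x_1)(1+x_2)\), where \(x_1,x_2\) are the Chern roots of \(\mathcal V_w\) and become the classes of the two split vertices in \(\tilde S_q\). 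The factor \(c(\mathcal L_{v_j})\) becomes \(1+y_j\). Hence
\[
\bar\rho^{-1}([pt])=\Delta\Big[(1+x_1)^{q-2}(1+x_2)^{q-2}\prod_j(1+y_j)^{-1}\Big]_{q-3}.
\]

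\textbf{Step 2: change of variables and normalization.} Put \(s=\tfrac{x_1+x_2}{2}\) and \(u_j=(y_j-x_1)+(y_j-x_2)\). Then \(x_{1,2}=s\mp\Delta/2\) and \(y_j=s+u_j/2\). I would argue that the degree-\(q-3\) part above is unchanged, modulo the relations of \(\mathrm{CH}^*(X)\), if we set \(s=0\). The reason is that the point class does not depend on the normalization of the universal bundles, and twisting all \(\mathcal V_i\) by a common line bundle only shifts \(s\). Concretely, I would check this directly: the \(s\)-derivative of the expression should be divisible by the linear relation \(I_{\text{lin}}\) in degree \(q-3\), or else the Hom-bundle form of Belmans--Franzen's formula makes the expression manifestly depend only on differences of Chern roots. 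After setting \(s=0\) we have \((1+x_1)(1+x_2)=1-\Delta^2/4\), so
\[
\bar\rho^{-1}([pt])=\Delta\sum_{i}\binom{q-2}{i}\Big(-\frac14\Big)^i\Delta^{2i}\,(-\tfrac12)^{q-3-2i}\,h_{q-3-2i}(u_1,\dots,u_q),
\]
where \(h_n\) denotes the complete homogeneous symmetric polynomial.

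\textbf{Step 3: reduce with the tautological relations.} For \(m=1\), relation \(T^{(1)}_j\) says \((y_j-x_1)(y_j-x_2)=\tfrac14(u_j^2-\Delta^2)=0\), so \(u_j^2=\Delta^2\) in \(\mathrm{CH}^*(\tilde X)\). Each monomial \(\prod_j u_j^{a_j}\) in \(h_n\) therefore reduces to \(\prod_{a_j\text{ odd}}u_j\cdot\Delta^{\sum_j(a_j-\epsilon_j)}\), with \(\epsilon_j\in\{0,1\}\) the parity of \(a_j\). Grouping by the squarefree support \(J\) with \(|J|=q-3-2i\), the coefficient of \(\prod_{j\in J}u_j\,\Delta^{2i}\) becomes a finite sum. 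It counts the ways of distributing extra even exponents among the \(q\) indices, weighted by the binomials from Step 2. The claim is that this collapses to \((i+1)/2^{q-3}\), up to the sign conventions from the opposite quiver.

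\textbf{Main obstacle.} The hardest part is this final combinatorial identity: an alternating sum of \(\binom{q-2}{k}\) against counts of multisets of size \(i-k\) on \(q\) letters must equal \(i+1\). This is a coefficient extraction in \((1-t)^{q-2}(1-t)^{-q}=(1-t)^{-2}\), and the coefficient of \(t^i\) in \((1-t)^{-2}\) is exactly \(i+1\). So I expect it to work once the powers of \(2\) and the signs are tracked carefully. The justification of \(s=0\) in Step 2 is the other delicate point. If it fails, I would instead keep \(s\) and use the linear relation together with \(T^{(2)}\) and \(T^{(3)}\) to eliminate it.
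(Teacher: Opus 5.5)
Your proof is correct, but it follows a different route from the paper's.

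\textbf{How the paper argues.} The paper applies the Belmans--Franzen formula to \(S_q^{\mathrm{op}}\) rather than to \(S_q\). This gives the representative \(\prod_j(1+y_j)\bigl((1+x_1)(1+x_2)\bigr)^{-2}\big|_{q-3}\), the reciprocal arrangement of yours. It then writes \(2+2y_j=u_j+(1+x_1)+(1+x_2)\), so the expansion is automatically multilinear in the \(u_j\). The only computation left is that \(H_k=\bigl[(2+x_1+x_2)^{k+3}(1+x_1)^{-2}(1+x_2)^{-2}\bigr]_k\) vanishes for odd \(k\) and equals \(4(k+2)\Delta^k\) for even \(k\). So the paper's identity already holds in the polynomial ring, and no tautological relation is used.

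\textbf{What your route buys.} You stay on \(S_q\) itself, which avoids the op-isomorphism and the dualization of universal bundles hidden in it. The price is that you need \(T^{(1)}_j\), in the form \(u_j^2=\Delta^2\), to remove the non-squarefree terms of \(h_n(u)\). In exchange, the coefficient computation becomes the transparent extraction \(\sum_k(-1)^k\binom{q-2}{k}\binom{q-1+i-k}{i-k}=[t^i](1-t)^{-2}=i+1\). There are no stray signs either: \((-\frac14)^k(-\frac12)^{q-3-2k}=(-1)^k2^{-(q-3)}\) because \(q-3\) is even. As a check, for \(q=5\) you get \(\frac{\Delta}{4}(h_2(u)-3\Delta^2)=\frac{\Delta}{4}(e_2(u)+2\Delta^2)\), which matches the statement.

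\textbf{Tightening Step 2.} Step 2 should be justified more cleanly than you propose. The \(s\)-dependence is not merely killed by \(I_{\text{lin}}\); it vanishes identically.
\begin{itemize}
\item Your expression is \(c_N(V)\), with \(N=q-3\), for the virtual bundle \(V=\mathcal V_w^{\oplus(q-2)}-\bigoplus_j\mathcal L_{v_j}\). This bundle has rank \(q-4=N-1\).
\item For a virtual bundle of rank \(r\) and a line bundle \(L\) with \(c_1(L)=t\), one has \(c(V\otimes L)=\sum_k c_k(V)(1+t)^{r-k}\).
\item Taking the degree-\(N\) part gives \(c_N(V\otimes L)=\sum_k\binom{N-1-k}{N-k}c_k(V)t^{N-k}=c_N(V)\). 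Every term with \(k<N\) vanishes, and the term \(k=N\) has coefficient \(\binom{-1}{0}=1\).
\item Formally in Chern roots, this says your polynomial is invariant under \(x_i\mapsto x_i+t\), \(y_j\mapsto y_j+t\). Hence substituting \(t=-s\) is an honest polynomial identity, with no relations needed.
\end{itemize}
The mechanism is not that the expression visibly depends only on differences of Chern roots; it is the virtual rank \(N-1\). This is the same phenomenon behind the paper's claim that \(H_k\) depends only on \(\Delta\).
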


\begin{proof}
    We first use \zcref{Lemma:quiver-op-iso}. Writing \(X'\) for \(\modulithetastable{S_q^{op}}{\underline{d}}\), we have an isomorphism \(\mathrm{CH}^*(X)\simeq \mathrm{CH}^*(X')\), which then have the same generators \(u_1, u_2, z_1, \dots, z_q\). Therefore, we can compute \([pt]\) for \(X\) by applying \zcref{Equation:quiver-moduli-pt} to \(X'\),

    \[
    [pt] = \frac{\prod_{j = 1}^q(1 + z_q)}{(1 + u_1 + u_2)^2}\big|_{q-3}.
    \]

    The image of \([pt]\) under \(\bar{\rho}^{-1}\) is computed explicitly using \zcref{Equation:rho-bar-inv-def} and elementary symmetric functions \(e_1(y_i) = y_i, e_1(x_1, x_2) = x_1 + x_2, e_2(x_1, x_2) = x_1x_2\).
    \begin{align*}
        \bar{\rho}^{-1}([pt]) &= \frac{\Delta}{2^{q}}\frac{\prod_{j = 1}^q(2 + 2y_j)}{(1 + x_1)^2(1 + x_2)^2}\big|_{q-3}\\ 
        &= \frac{\Delta}{2^{q}}\frac{\prod_{j = 1}^q((y_j - x_1) + (y_j - x_2) + (1 + x_1) + (1 + x_2))}{(1 + x_1)^2(1 + x_2)^2}\big|_{q-3}\\ 
        &= \frac{\Delta}{2^{q}}\sum_{k = 0}^{q-3}\sum_J^{q-3-k}\prod_{j\in J}(y_j - x_1 + y_j - x_2)\left[\frac{(1 + x_1 + 1 + x_2)^{k+3}}{(1 + x_1)^2(1 + x_2)^2}\right]_{k}.
    \end{align*}

    We write \(H_k(x_1, x_2)\) for the component of homogeneous degree \(k\) of the power series expansion of \(\frac{(1 + x_1 + 1 + x_2)^{k+3}}{(1 + x_1)^2(1 + x_2)^2}\). It can be verified at odd and even degrees respectively that 

    \[
    H_k(x_1, x_2) = 
    \begin{cases}
        0, & \text{if \(k\) is odd;}\\ 
        4(k+2)\Delta^k, & \text{if \(k\) is even.}
    \end{cases}
    \]

    Substituting \(k = 2i\), we have the desired form of \([pt]\) in the proposition. 

\end{proof}

The next ingredient we need for degree computation is the following lemma.

\begin{lemma}\label{Lemma:point-config-alt-prod-sum-eq}
    For each \(0\leq j'\leq q-2\), the equation 

    \[
    \Delta^{q-2-j'}\sum_{J}^{j'}\left(\prod_{j\in J}(y_j - x_1) + (-1)^{j'}\prod_{j\in J}(y_j - x_2)\right) = D_{j'}\Delta^{q-2}
    \]
    
    holds in \(\mathrm{CH}^{q-2}(\tilde{X})\), where \(D_{j'} = \frac{2q}{q-j'}\binom{\frac{q-1}{2}}{j'}\).
\end{lemma}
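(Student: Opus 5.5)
The plan is to work entirely in $\mathrm{CH}^*(\tilde X)$. By \zcref{Lemma:quiver-op-iso} and the identification of $\tilde S_q$ with $(\ktqo)^{\mathrm{op}}$, this is the Chow ring of \zcref{Subsection:4-1} with $m=1$. Put $a_j\coloneqq y_j-x_1$ and $b_j\coloneqq y_j-x_2$, so that $b_j=a_j-\Delta$ with $\Delta=x_2-x_1$. For $m=1$ the relations are
\[
a_jb_j=0,\qquad e_n(a_1,\dots,a_q)=0=e_n(b_1,\dots,b_q)\quad\text{for } n\geq \tfrac{q+1}{2}.
\]
The second set holds because each $e_n$ with $n\ge \frac{q+1}{2}$ is a sum of tautological monomials $T^{(2)}_I$, resp. $T^{(3)}_I$. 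The first relation gives $a_j^2=\Delta a_j$. Consequently any product $\prod_{j\in J}b_j$ expands multilinearly with no squares:
\[
e_n(b)=\sum_{k=0}^{n}(-1)^{n-k}\binom{q-k}{n-k}\Delta^{n-k}e_k(a).
\]
Set $E_k\coloneqq \Delta^{q-2-k}e_k(a)\in \mathrm{CH}^{q-2}(\tilde X)$. Then the left-hand side of the lemma becomes an explicit $\mathbb{Z}$-linear combination of the $E_k$, namely
\[
E_{j'}+\sum_{k}(-1)^{k}\binom{q-k}{j'-k}E_k .
\]

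The second step is to pass to anti-invariants. The swap $x_1\leftrightarrow x_2$ (the $W_{\underline d}=\mathrm{Sym}_2$ action) sends $a\leftrightarrow b$ and $\Delta\mapsto-\Delta$. Since $q$ is odd, the left-hand side of the lemma is anti-invariant, as a direct sign check shows. It therefore equals twice its own anti-invariant projection. Since $\mathrm{CH}^{q-2}(\tilde X)^{\text{anti}}\simeq \mathrm{CH}^{q-3}(X)$ is one-dimensional and spanned by $\Delta^{q-2}$, we may write
\[
\pi^{\text{anti}}(E_k)=t_k\Delta^{q-2},\qquad t_k\in\mathbb{Q}.
\]
Here $t_0=1$, and $t_k=0$ for $k\geq\frac{q+1}{2}$. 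Projecting the relations $\Delta^{q-2-n}e_n(b)=0$ for $\frac{q+1}{2}\leq n\leq q-2$ gives the linear system
\[
\sum_{k=0}^{(q-1)/2}(-1)^{n-k}\binom{q-k}{n-k}t_k=0,\qquad \tfrac{q+1}{2}\le n\le q-2.
\]
This is $\frac{q-3}{2}$ equations in the $\frac{q-3}{2}$ unknowns $t_1,\dots,t_{(q-1)/2}$. The system is unitriangular after reindexing, so its solution is unique. Granting that $\Delta^{q-2}\neq 0$ (it is nonzero, since by \zcref{Proposition:point-config-pt-toric} the class $\bar\rho^{-1}([pt])$ is a combination of anti-invariants and the anti part is spanned by $\Delta^{q-2}$), we then get
\[
D_{j'}=2\Bigl(t_{j'}+\sum_k(-1)^k\binom{q-k}{j'-k}t_k\Bigr).
\]

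The main obstacle is purely combinatorial. One must guess closed forms for the $t_k$, verify that they satisfy the triangular system, and then show that the resulting combination collapses to $\frac{2q}{q-j'}\binom{\frac{q-1}{2}}{j'}$. To guess the $t_k$, I would first compute them for $q=5,7,9$; from the target formula a natural guess is that $t_k$ is a ratio of binomials in $\frac{q-1}{2}$ and $q$. For the verification I would use Vandermonde-type alternating identities in the style of \zcref{Identity:alt-binom-prod-sum} and \zcref{Identity:alt-shifted-binom-sum}, or else a generating-function argument in a formal variable $z$ replacing $\Delta$. Two sanity checks are available: $j'=0$ gives $D_0=2$, matching $2\Delta^{q-2}$, and the formula vanishes for $j'>\frac{q-1}{2}$ apart from the boundary terms.
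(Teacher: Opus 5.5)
Your reduction to the numbers $t_k$ is sound: the expansion of $e_n(b)$, the anti-invariance check, and $t_k=0$ for $k\ge\frac{q+1}{2}$ are all correct. The step meant to determine the $t_k$, however, fails.

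\textbf{The system is underdetermined.} The relations $\Delta^{q-2-n}e_n(b)=0$ for $\frac{q+1}{2}\le n\le q-2$ give only $\frac{q-3}{2}$ equations. But $t_1,\dots,t_{(q-1)/2}$ are $\frac{q-1}{2}$ unknowns, not $\frac{q-3}{2}$. Moreover, since $k\le\frac{q-1}{2}<n<q$, every coefficient $(-1)^{n-k}\binom{q-k}{n-k}$ is nonzero, so no reindexing makes the system triangular. Already for $q=5$ you get the single equation $-10+6t_1-3t_2=0$ in two unknowns. As it stands, your system has a one-parameter family of solutions and cannot determine $D_{j'}$.

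\textbf{What is missing.} First, you need the case $j'=1$ by hand; this is the paper's second base case $D_1=q$. Since $a_j-b_j=\Delta$, the left-hand side is $\Delta^{q-3}\sum_j(a_j-b_j)=q\Delta^{q-2}$, i.e.\ $t_1=\frac q2$. Second, you must prove that the remaining square matrix (rows $\frac{q+1}{2}\le n\le q-2$, columns $2\le k\le\frac{q-1}{2}$) is invertible. Up to signs it is the minor $\bigl(\binom{p}{r}\bigr)$ of Pascal's matrix with consecutive rows $p=q-k\in\{\frac{q+1}{2},\dots,q-2\}$ and consecutive columns $r=q-n\in\{2,\dots,\frac{q-1}{2}\}$. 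Such a minor is nonsingular, for instance by Lindstr\"om--Gessel--Viennot or the classical product formula for such binomial determinants, but you have to supply this argument. The paper sidesteps the issue by using the $m=1$ case of \zcref{Proposition:k2qm-chow-eqs} instead. The extra factor $\binom{\frac{q-1}{2}-k}{i}$ there kills every $t_k$ with $k>\frac{q-1}{2}-i$, so that system is genuinely triangular and can be solved by induction from $D_0=2$, $D_1=q$.

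\textbf{Two smaller corrections.} Since the left-hand side $L$ is anti-invariant, it equals its own anti-invariant projection, not twice it. Better still, $L=E_{j'}-\sigma(E_{j'})=2\pi^{\mathrm{anti}}(E_{j'})$, so simply $D_{j'}=2t_{j'}$. Your formula $D_{j'}=2\bigl(t_{j'}+\sum_k\cdots\bigr)$ gives $D_0=4$, which contradicts your own sanity check.

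\textbf{The step you leave undone is short.} Take $t_k=\frac{q}{q-k}\binom{(q-1)/2}{k}$. Then $\frac{q}{q-k}\binom{q-k}{n-k}=\frac{q}{q-n}\binom{q-1-k}{n-k}$. Apply \zcref{Identity:shifted-binom-diff} with $l=\frac{q-1}{2}$, upper entry $q-1$ and $s=q-1-n$. Each of your equations becomes $\pm\frac{q}{q-n}\binom{(q-1)/2}{(q-1)/2-n}=0$, which holds for $n\ge\frac{q+1}{2}$. Together with $t_0=1$, $t_1=\frac q2$ and the invertibility above, this completes your route. The vanishing of $D_{j'}$ for $j'\ge\frac{q+1}{2}$ is then immediate from $E_{j'}=0$.
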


\begin{proof}

    The polynomial on the left is an anti-invariant in \(\mathrm{CH}^{q-2}(\tilde{X})\), hence equals \(D_{j'}\Delta^{q-2}\) for some \(D_{j'}\in \mathbb{Q}\). This is true because \(\mathrm{CH}^{q-2}(\tilde{X})^{\text{anti}}\simeq \mathrm{CH}^{q-3}(X)\) is a one-dimensional \(\mathbb{Q}\)-vector space. 
    To compute these \(D_{j'}\), we plug \(m=1\) into the equation in \zcref{Proposition:k2qm-chow-eqs}. It gives rise to a relation in \(\mathrm{CH}^{\frac{q+1}{2} + i}(\tilde{X})\)

    \[
    \sum_{k = 0}^{\frac{q-1}{2}-i}\binom{\frac{q-1}{2}-k}{i}(-\Delta)^{\frac{q+1}{2}+i - k}\sum_{J\subset I}^k\prod_{j\in J}(y_j - x_1) = 0
    \]

    for each \(0\leq i\leq \frac{q-5}{2}\) and \(I\subset \{1, \dots, q\}\) with \(|I| = \frac{q+1}{2} + i\).

    The same procedure works when we switch \(x_1\) and \(x_2\) in the process and the result is 

    \[
    \sum_{k = 0}^{\frac{q-1}{2}-i}\binom{\frac{q-1}{2}-k}{i}\Delta^{\frac{q+1}{2}+i - k}\sum_{\substack{K\subset I \\ |K| = k}}\prod_{j\in K}(y_j - x_2) = 0.
    \]

    We multiply both equations by \(\Delta^{\frac{q-5}{2}-i}\) to make sure they live in \(\mathrm{CH}^{q-2}(\tilde{X})^{\text{anti}}\). Adding them up gives an equation for each \(i\) concerning \(D_0, \dots, D_{\frac{q-1}{2}-i}\):

    \begin{align*}
        &\sum_{k = 0}^{\frac{q-1}{2}-i}(-1)^k\frac{\binom{\frac{q-1}{2}-k}{i}\binom{q}{\frac{q+1}{2}+i}\binom{\frac{q+1}{2}+i}{k}}{\binom{q}{k}}\Delta^{q-2-k}\sum_{K}^{k}\left(\prod_{j\in K}(y_j - x_1) + (-1)^k\prod_{j\in K}(y_j - x_2)\right)\\ 
        =&\sum_{k = 0}^{\frac{q-1}{2}-i}(-1)^k\binom{q-k}{\frac{q-1}{2}-i}\binom{\frac{q-1}{2}-k}{i}D_k\Delta^{q-2} = 0.
    \end{align*}

    We can then compute \(D_{j'}\) via recursion

    \[
    D_{j'} = (-1)^{j+1}\frac{\sum\limits_{k = 0}^{j' - 1}(-1)^k\binom{q-k}{j'}\binom{\frac{q-1}{2}-k}{\frac{q-1}{2}-j'}}{\binom{q-j'}{j'}}D_k.
    \]

    The base cases are \(D_0 = 2, D_1 = q\). Assume now \(D_{l} = \frac{2q}{q-l}\binom{\frac{q-1}{2}}{l}\) holds for \(l = 0, \dots, j'-1\). Then 

    \begin{align*}
        D_{j'} &= (-1)^{j'+1}\frac{\sum\limits_{k = 0}^{j' - 1}(-1)^k\binom{q-k}{j'}\binom{\frac{q-1}{2}-k}{\frac{q-1}{2}-j'}}{\binom{q-j'}{j'}}\frac{2q}{q-k}\binom{\frac{q-1}{2}}{k}\\  
        &=(-1)^{j'+1}\frac{2q\binom{\frac{q-1}{2}}{j'}}{\binom{q-j'}{j'}}\sum_{k = 0}^{j' - 1}(-1)^k\frac{1}{j'}\binom{j'}{k}\binom{q-1-k}{j'-1},
    \end{align*}

    and the sum equals \((-1)^{j'+1}\binom{j'}{j'}\binom{q-1-j'}{j'-1}\) due to \zcref{Identity:shifted-binom-diff}. Thus, we conclude that 
    
    \[
    D_{j'} = (-1)^{j'+1}\frac{2q}{\binom{q-j'}{j'}}\binom{\frac{q-1}{2}}{j'}(-1)^{j'+1}\frac{\binom{q-1-j'}{j'-1}}{j'} = \frac{2q}{q-j'}\binom{\frac{q-1}{2}}{j'}
    \]

\end{proof}

The next step is to compute \(q_1\) and \(q_2\) in \zcref{Equation:pt-config-deg-def}. 

\begin{align*}
    (-K_{\tilde{X}})^{q-3}\Delta &= \Delta\sum_{[\mathbf{r}, \mathbf{s}]\in \mathrm{P}(q-3,2q)}\binom{q-3}{[\mathbf{r}, \mathbf{s}]}\prod_{j = 1}^q(y_j - x_1)^{r_j}(y_j - x_2)^{s_j} = q_1\Delta^{q-2}\\ 
    \bar{\rho}^{-1}([pt]) &= \frac{\Delta}{2^{q-3}}\sum_{i = 0}^{\frac{q-3}{2}}(i+1)\sum_J^{q-3-2i}\prod_{j\in J}(y_j - x_1 + y_j - x_2)\Delta^{2i} = q_2\Delta^{q-2}
\end{align*}

The strategy is similar to the proof of \zcref{Proposition:k2qm-deg}. Every term in \((-K_{\tilde{X}})^{q-3}\Delta\) can be written as \(\Delta\prod_{j\in J_r}(y_j - x_1)^{r_j}\prod_{j\in J_s}(y_2 - x_2)^{s_j}\) such that \(\sum_{j\in J_r}r_j + \sum_{j\in J_s}s_j = q-3\). We first have the following reduction via \(T_j^{(1)}\). 

\[
\prod_{j\in J_r}(y_j - x_1)^{r_j}\prod_{j\in J_s}(y_j - x_2)^{s_j}=(-1)^{\sum_{j\in J_s} (s_j - 1)}\prod_{j\in J_r}(y_j - x_1)\prod_{j\in J_s}(y_j - x_2)\Delta^{q-3-j_r-j_s}.
\]

This shows that terms with the same \(J_r\) and \(J_s\) only differs by a sign. In fact, their value only depends on \(j_r, j_s\). We can then reindex the sum in \((-K_{\tilde{X}})^{q-3}\Delta\) by \(j_r\) and \(j_s\). The coefficient of the term \(\prod_{j\in J_r}(y_j - x_1)\prod_{j\in J_s}(y_j - x_2)\Delta^{q-2-j_r-j_s}\) after reindexing is 

\begin{align*}
    &\sum_{\substack{\sum_{j\in J_r}r_j + \sum_{j\in J_s}s_j=q-3\\ r_j\geq 1, \text{ for all } j\in J_r, s_j\geq 1, \text{ for all } j\in J_s}}\frac{(q-3)!}{\prod_{j\in J_r}r_j!\prod_{j\in J_s}s_j!}(-1)^{\sum_{j\in J_s}(s_j - 1)} \\ 
    =& (-1)^{j_s}\sum_{p_1 + p_2 = q-3}(-1)^{p_2}\binom{q-3}{p_1}\begin{Bmatrix}
        p_1\\ 
        j_r
    \end{Bmatrix}j_r!\begin{Bmatrix}
        p_2\\ 
        j_s
    \end{Bmatrix}j_s!,
\end{align*}

which is precisely \((-1)^{j_s}E(j_r, j_s, q-3)\), a sum we introduced in \zcref{Example:k2q1-deg}. Notice a clear duality here \(E(j_r, j_s, q-3) = E(j_s, j_r, q-3)\). So we want to compute the sum \(((-1)^{j_r}\prod_{j\in J_r}(y_j - x_1)\prod_{j\in J_s}(y_j - x_2) + (-1)^{j_s}\prod_{j\in J_r}(y_j - x_2)\prod_{j\in J_s}(y_j - x_1))\Delta^{q-2-j_r-j_s}\) for a pair \((j_r, j_s)\) where \(j_r\geq j_s\). 

Again, we apply the substitution \(y_j - x_2 = y_j - x_1 - \Delta\) and \(y_j - x_1 = y_j - x_2 + \Delta\) to the above sum and obtain 

\[
\sum_{k = 0}^{j_s}\sum_{\substack{K\subset J_s\\ |K|=k}}\left[(-1)^k\prod_{j\in J_r\cup J_s\setminus K}(y_j - x_1) + (-1)^{q-3-j_r-j_s}\prod_{j\in J_r\cup J_s\setminus K}(y_j - x_2)\right]\Delta^k.
\]

Now we sum it over all subsets of \(\{1, \dots, q\}\) of same sizes, which allows us to apply \zcref{Lemma:point-config-alt-prod-sum-eq} to convert these sums into multiples of \(\Delta^{q-2}\).

\begin{align*}
    &(-K_{\tilde{X}})^{q-3}\Delta \\ 
    =&\left[\left(\sum_{j' = 1}^{q-3}\sum_{j_s = 0}^{\lceil \frac{j}{2} \rceil - 1}\right) + \left(\frac{1}{2}\sum_{j_r = 0, j_s = j_r}^{\frac{q-3}{2}}\right)\right] (-1)^{j_s}E(j_r, j_s, q-3)\sum_{k = 0}^{j_s}(-1)^k\frac{\binom{q}{j_r}\binom{q-j_r}{j_s}\binom{j_s}{k}}{\binom{q}{j_r + j_s - k}}D_{j'-k}\Delta^{q-2}\\ 
    =& q_1\Delta^{q-2}.
\end{align*}

The same procedure applies to every term in \(\bar{\rho}^{-1}([pt])\), and we thus obtain 

\begin{align*}
    \bar{\rho}^{-1}([pt]) &= \frac{\Delta^{q-2}}{2^{q-2}}(q-1) + \frac{\Delta^{q-2}}{2^{q - 3}}\sum_{i = 0}^{\frac{q-3}{2}}(i+1)\\ 
    &\times\left[\left(\sum_{j'=0}^{\frac{q-5}{2}-i}\sum_{k=0}^j\right) + \frac{1}{2}\left(\sum_{k=0}^{\frac{q-3}{2}-i}\right)\right](-1)^k\binom{2i+3+k}{k}\binom{q-3-2i-k}{j'-k}D_{q-2i-3-k}\\ 
    &= q_2\Delta^{q-2}.
\end{align*}

By dividing \(q_1\Delta^{q-2}\) by \(q_2\Delta^{q-2}\) we obtain the degree formula.

\begin{proposition}\label{Proposition:point-config-deg}
    The degree of the moduli space of stable \(q\)-point configurations on \(\mathbb{P}^1\) is 

    \begin{equation*}
        D(q) = \frac{2^{q-3}\left(\sum\limits_{j'=1}^{q-3}\sum\limits_{j_s=0}^{\lceil \frac{j'}{2}\rceil-1}+\frac{1}{2}\sum\limits_{j_r=0,j_s=j_r}^{\frac{q-3}{2}}\right)(-1)^{j_s}E(j_r, j_s, q-3)\sum\limits_{k=0}^{j_s}(-1)^k\binom{j'-k}{j_r}\binom{q+k-j'}{k}D_{j'-k}}{\frac{q-1}{2} + \sum\limits_{i = 0}^{\frac{q-5}{2}}(i+1)\left(\sum\limits_{j' = 0}^{\frac{q-5}{2}-i}\sum\limits_{k=0}^{j'} + \frac{1}{2}\sum\limits_{\substack{k=0\\j'=\frac{q-3}{2}-i}}^{\frac{q-3}{2}-i}\right)(-1)^k\binom{2i+3+k}{k}\binom{q-2i-3-k}{j'-k}D_{q-2i-3-k}},
    \end{equation*}

    where \(D_{j'} = 2q\binom{\frac{q-1}{2}}{j'} / (q-j')\) for \(0\leq j'\leq q-2\).
\end{proposition}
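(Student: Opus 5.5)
The formula is the quotient \(D = q_1/q_2\) from \zcref{Equation:pt-config-deg-def}, so the proof reduces to computing \(q_1\) and \(q_2\) as explicit rational multiples of \(\Delta^{q-2}\) in \(\mathrm{CH}^{q-2}(\tilde X)^{\text{anti}}\). That space is one-dimensional, being isomorphic to \(\mathrm{CH}^{q-3}(X)\). By \zcref{Corollary:toric-red-degree}, \(q_1\) and \(q_2\) satisfy \((-K_{\tilde X})^{q-3}\Delta = D\,\bar\rho^{-1}([pt])\). Since \(\tilde S_q = (\ktqo)^{\text{op}}\), \zcref{Lemma:quiver-op-iso} lets us do all computations in the Chow ring of \(\modulithetastable{\ktqo}{\underline 1}\) with generators \(x_1,x_2,y_1,\dots,y_q\). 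There the relations \(T^{(1)}_j\) (with \(m=1\)) give \((y_j-x_1)(y_j-x_2)=0\), so \((y_j-x_2)^2=-(y_j-x_2)\Delta\) and \((y_j-x_1)^2 = (y_j-x_1)\Delta\).

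\textbf{Computing \(q_1\).} Expand \((-K_{\tilde X})^{q-3}=\bigl(\sum_j (y_j-x_1)+(y_j-x_2)\bigr)^{q-3}\) multinomially. Each monomial \(\prod_{J_r}(y_j-x_1)^{r_j}\prod_{J_s}(y_j-x_2)^{s_j}\) with \(J_r\cap J_s=\emptyset\) collapses, via the relations above, to \(\pm\prod_{J_r}(y_j-x_1)\prod_{J_s}(y_j-x_2)\Delta^{q-3-j_r-j_s}\); monomials with \(J_r\cap J_s\ne\emptyset\) vanish. Collecting the multinomial coefficients by \((j_r,j_s)\) gives the coefficient \((-1)^{j_s}E(j_r,j_s,q-3)\), with the Stirling-number evaluation as in \zcref{Example:k2q1-deg}.

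Next pair the \((j_r,j_s)\) and \((j_s,j_r)\) contributions using the symmetry \(E(j_r,j_s,q-3)=E(j_s,j_r,q-3)\). The diagonal \(j_r=j_s\) is counted with weight \(\tfrac12\). In the combined expression, substitute \(y_j-x_2=(y_j-x_1)-\Delta\) on \(J_s\), and symmetrically on the mirror term. Every piece then takes the shape \(\Delta^{q-2-j'}\bigl(\prod_{K}(y_j-x_1)+(-1)^{j'}\prod_K(y_j-x_2)\bigr)\) with \(|K|=j'-k\).

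Summing over all subsets of each size and counting how many times each \(K\) occurs gives the factors \(\binom{j'-k}{j_r}\binom{q+k-j'}{k}\). By \zcref{Lemma:point-config-alt-prod-sum-eq} each such symmetric sum equals \(D_{j'-k}\Delta^{q-2}\). This yields the numerator.

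\textbf{Computing \(q_2\).} Start from \zcref{Proposition:point-config-pt-toric}. Expand each \(\prod_{j\in J}\bigl((y_j-x_1)+(y_j-x_2)\bigr)\) into mixed products and apply the same pairing and substitution procedure. Again \zcref{Lemma:point-config-alt-prod-sum-eq} converts every symmetrized sum into a multiple \(D_{q-2i-3-k}\Delta^{q-2}\), with the binomial weights \(\binom{2i+3+k}{k}\binom{q-2i-3-k}{j'-k}\).

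The extreme term \(i=\frac{q-3}{2}\), i.e. \(J=\emptyset\), is simply \(\frac{q-1}{2}\Delta^{q-2}\) after the prefactor. This explains the isolated \(\frac{q-1}{2}\) in the denominator. Clearing the common power \(2^{q-3}\) then gives the stated quotient.

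\textbf{Main obstacle.} The delicate step is the bookkeeping of the symmetrization: making sure that each mixed product \(\prod_{J_r}(y_j-x_1)\prod_{J_s}(y_j-x_2)\) is matched with its mirror term, so that only combinations of the anti-invariant form covered by \zcref{Lemma:point-config-alt-prod-sum-eq} appear, with the correct signs \((-1)^{j'}\) and the half-weight on the diagonal. Everything else is binomial counting.

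I would first verify the combinatorial multiplicities by a double-counting argument: choose \(K\), then choose which elements of \(J_r\cup J_s\setminus K\) lie in \(J_r\). I would then check the final formula numerically for \(q=5,7\) against a direct Chow-ring computation.
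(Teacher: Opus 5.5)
Your proposal is correct and follows the paper's own argument essentially step for step. Like the paper, you write $D=q_1/q_2$ and collapse monomials with $T^{(1)}_j$ to get the coefficients $(-1)^{j_s}E(j_r,j_s,q-3)$. You then pair mirror terms (half weight on the diagonal), substitute on $J_s$, count the multiplicities $\binom{j'-k}{j_r}\binom{q+k-j'}{k}$, apply \zcref{Lemma:point-config-alt-prod-sum-eq}, and repeat the procedure on \zcref{Proposition:point-config-pt-toric}, where the $J=\emptyset$ term gives the isolated $\frac{q-1}{2}$.

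There is one small indexing slip. Since $|K|=j'-k$, the pieces should read $\Delta^{q-2-(j'-k)}\bigl(\prod_K(y_j-x_1)+(-1)^{j'-k}\prod_K(y_j-x_2)\bigr)$. That is what makes the lemma return $D_{j'-k}$.
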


\begin{example}
    Let \(q = 5\). It has been established in \cite[Section 5.1]{frRe2021fano} that \(M^{\tcan\text{-st}}(S_5, (1^5;2))\) is isomorphic to the blowup of \(\mathbb{P}^2\) in 4 general points. The coefficients \(D_{j'}\) are

    \[
    D_0 = 2, D_1 = 5, D_2 =10/3.
    \]

    The degree is 

    \[
    D(5) = 4\frac{D_1 + 2D_2 + 2D_2 - 4D_1}{2 + D_2 + D_2 + 2D_1} = 5.
    \]

    We see that the degree indeed agrees with that in \cite{fanography}. 
\end{example}

\begin{example}
    Let \(q = 7\). Then \(M^{\tcan\text{-st}}(S_7, (1^7;2))\) is a Fano 4-fold of Picard rank 7, index 1, and the numbers \(D_{j'}\) are as below. 

    \[
    D_0 = 2, D_1 = 7, D_2 = \frac{42}{5}, D_3 = \frac{7}{2}.
    \]

    Then the degree is 

    \begin{align*}
        D(7) &= 16\frac{D_1 + 16D_2 - 6D_1 + 36D_3 + 36D_3 - 60D_2 + 24D_4 + 96D_4 - 96D_3 + 72D_4 - 144D_3 + 120D_2}{3 + D_4 + 4D_4 - 4D_3 + 3D_4 - 6D_3 + 5D_2 + D_2 + 2D_2 - 6D_1}\\ 
        &= 154.
    \end{align*}
\end{example}

We have verified these degrees using the \cite{quivertools} package for \(q = 5, 7, \dots, 13\). See \zcref{appendix:B} for more details. 

\newpage

\appendix

\section{Combinatorial identities}\label{appendix:A}

\begin{identity}[Alternating shifted binomial sum]
    \label{Identity:alt-shifted-binom-sum}
    For \(0\leq i\leq l\), we have

    \[
    \sum_{k = 0}^i(-1)^{i - k}\binom{l + i}{l + k} = (-1)^i\binom{l + i - 1}{i}
    \]
\end{identity}

\begin{proof}
    We apply the Pascal's identity 

    \[
    \binom{n}{m} = \binom{n - 1}{m} + \binom{n - 1}{m - 1}
    \]

    to each summand \(\binom{l + i}{l + k}\) with the substitution \(n = l + i, m = l + k\) to obtain

    \begin{align*}
        \sum_{k = 0}^i(-1)^{i - k}\binom{l + i}{l + k} &= \sum_{k = 0}^i(-1)^{i - k}\left(\binom{l + i - 1}{l + k} + \binom{l + i - 1}{l + k - 1}\right)\\ 
        &= (-1)^i\binom{l + i - 1}{l - 1} + \binom{l + i - 1}{l + i}
    \end{align*}

    By convention, a binomial coefficient \(\binom{n}{k}\) with \(n < k\) is 0. Thus, we obtained the desired identity. 

\end{proof}

\begin{identity}[Alternating binomial product]
    \label{Identity:alt-binom-prod-sum}
    For any \(0 < j\leq i\leq l\),

    \[
    \sum_{k = 0}^i(-1)^k\binom{l-k}{i-k}\binom{l+1-j}{k} = 0.
    \]
    
\end{identity}

\begin{proof}

    The sum is the coefficient of \(t^{l-i}\) in \(\sum\limits_{k = 0}^{l}(-1)^k\binom{l+1-j}{k}(1+t)^{l-k}\), which equals
    
    \begin{align*}
     &\sum_{k = 0}^l(-1)^k\binom{l+1-j}{k}(1+t)^{l-k}\\
     =& (1+t)^{l}\sum_{k = 0}^l\binom{l+1-j}{k}(-\frac{1}{1+t})^k\\ 
     =& (1+t)^{l}(1-\frac{1}{1+t})^{l-j}\\ 
     =& (1+t)^{j-1}t^{l+1-j}. 
    \end{align*}
    
    Since $j\leq i$, we always have $l+1-j > l-i$. Hence, the coefficient of $t^{l-i}$ is 0. 
\end{proof}

\begin{identity}[Shifted binomial difference]\label{Identity:shifted-binom-diff}
    Let \(t, s, l\) be non-negative integers. Then 
    \[
    \sum_{k = 0}^l(-1)^k\binom{l}{k}\binom{t - k}{s} = \binom{t - l}{s - l}.
    \]
\end{identity}

\begin{proof}

    Notice that \(\binom{t - k}{s}\) is the coefficient of \(x^s\) in the expansion of \((1 + x)^{t - k}\). Therefore, the left-hand side of the identity equals the coefficient of \(x^s\) in the expansion of 
    
    \begin{align*}
        \sum_{k = 0}^{l}(-1)^k\binom{l}{k}(1 + x)^{t - k}=&(1 + x)^t\sum_{k = 0}^l\binom{l}{k}(-\frac{1}{1 + x})^k\\ 
        =&(1 + x)^t(1 - \frac{1}{1+x})^l\\ 
        =&x^l(1 + x)^{t - l},
    \end{align*}
    
    which is \(\binom{t - l}{s - l}\). Thus, the identity holds. 
    
\end{proof}

\section{SageMath Codes}\label{appendix:B}

In this section, we append source codes for degree formulas in \zcref{Proposition:k2qm-deg} and \zcref{Proposition:point-config-deg}. 

\subsection{SageMath codes for \texorpdfstring{\(D(q, m)\)}{D(q,m)}}

\begin{lstlisting}[
  language=Python,
  breaklines=true,
  breakatwhitespace=false
]
from sage.all import *
from itertools import product, combinations, permutations
from collections import Counter
import numbers
import numpy as np

# determine whether a term vanishes
def vanishing_term(expo_list_r, expo_list_s, J_r, J_s, q, m):
    # case 1
    if (set(J_r) & set(J_s)) != set([]):
        return True
    # case 2
    elif (len(J_r) >= (q+1)/2) | (len(J_s) >= (q+1)/2) :
        return True
    # case 3
    elif any([expo_list_r[i] + expo_list_s[i] < 2*m-1 for i in J_r] + [expo_list_r[i] + expo_list_s[i] < 2*m-1 for i in J_s]):
        return True
    else:
        return False

# compute the degree
def deg_k2qm(q, m):
    D = 0
    l = (q-1)/2
    for int_vec in IntegerVectors(2*q*m-q-1, 2*q):
        expo_list_r = int_vec[0:q]
        expo_list_s = int_vec[q:]
        J_r = np.where(np.array(expo_list_r) >= m)[0]
        J_s = np.where(np.array(expo_list_s) >= m)[0]
        j_r = len(J_r)
        j_s = len(J_s)
        
        if vanishing_term(expo_list_r, expo_list_s, J_r, J_s, q, m) == False:
            temp_sign = (-1)**(l-j_s) * (-1)**(sum([m-1-expo_list_s[j] for j in range(q) if j not in list(J_r)+list(J_s)])) * (-1)**(sum([expo_list_r[j]+expo_list_s[j]-2*m+1 for j in J_s]))
            temp_D = temp_sign * binomial(q-1-j_r-j_s, l-j_r) * prod([binomial(2*m-2-expo_list_r[j]-expo_list_s[j], m-1-expo_list_s[j]) for j in range(q) if j not in list(J_r)+list(J_s)])
            temp_D *= multinomial(list(int_vec)) * prod([binomial(expo_list_r[j]-m, m-1-expo_list_s[j]) for j in J_r]) * prod([binomial(expo_list_s[j]-m, m-1-expo_list_r[j]) for j in J_s])
            D += temp_D
    
    return D * m**(2*q*m - q - 1)
\end{lstlisting}

We compute the degrees in the case \((q, m) = (3, 1), (3, 2), (3, 3)\) and compare them with degrees obtained by using \cite{quivertools}. 

\begin{lstlisting}[
  language=Python,
  breaklines=true,
  breakatwhitespace=false
]
sage: from degree_formula_k2qm import *
sage: [deg_k2qm(3,i) for i in [1,2,3]]
[6, 12128256, 4678810092768240]
sage: from quiver import *
sage: for m in range(1, 4):
....:     M = matrix(5, 5)
....:     for i in range(2,5):
....:         M[0,i] = m
....:         M[1,i] = m
....:     Q = Quiver.from_matrix(M)
....:     X = QuiverModuliSpace(Q, (1,1,1,1,1))
....:     print(X.degree(Q.canonical_stability_parameter((1,1,1,1,1))))
....: 
6
12128256
4678810092768240
\end{lstlisting}

\subsection{SageMath codes for \texorpdfstring{\(D(q)\)}{D(q)}}

SageMath codes for degrees of \(q\)-point configuration spaces on \(\mathbb{P}^1\):

\begin{lstlisting}[
  language=Python,
  breaklines=true,
  breakatwhitespace=false
]
from sage.all import *
import math

def stirling_sum(jr,js,t):
    return sum([(-1)^(p2) * binomial(t, p2) * stirling_number2(t-p2, jr) * factorial(jr) * stirling_number2(p2,js) * factorial(js) for p2 in range(js, t+1-jr)])

def pt_config_deg(q):

    # compute the coefficients D_{j'}
    D = [2*q*binomial((q-1)/2, l) / (q-l) for l in range(q-2)]

    # compute the q_1 for the anti-canonical class
    AKX = 0
    for j in range(1, q-2):
        for js in range(floor(j/2)+1):
            jr = j - js
            temp_factor = (-1)^(js) * stirling_sum(jr,js,q-3)
            temp_sum = 0
            for k in range(js+1):
                temp_sum += (-1)^k * binomial(j-k, jr) * binomial(q+k-j, k) * D[j-k]
                print(jr,js,j - k, (-1)^k * binomial(j-k, jr) * binomial(q+k-j, k), temp_factor)
            
            if jr == js:
                AKX += temp_factor * temp_sum / 2
            else:
                AKX += temp_factor * temp_sum

    # compute q_2 for the point class
    PT = 0
    for i in range((q-3)/2):
        temp_factor = i+1
        
        for j in range((q-1)/2-i):
            temp_sum = 0
            for k in range(j+1):
                temp_sum += (-1)^k * binomial(2*i+3+k,k) * binomial(q-3-2*i-k, j-k) * D[q-3-2*i-k]
            if 2*j == q-3-2*i:
                PT += temp_factor * temp_sum / 2
            else:
                PT += temp_factor * temp_sum

    PT += (q-1)/2
    PT /= 2^(q-3)
    return AKX / PT
\end{lstlisting}

We compute the degrees for \(q = 5, 7, \dots, 13\) and compare them with degrees obtained by using \cite{quivertools}. 

\begin{lstlisting}[
  language=Python,
  breaklines=true,
  breakatwhitespace=false
]
sage: from degree_formula_Sq import *
sage: [pt_config_deg(q) for q in [5,7,9,11,13]]
[5, 154, 13005, 2189726, 620169186]
sage: from quiver import *
sage: for q in range(5,15,2):
....:     Sq = SubspaceQuiver(q)
....:     d = tuple([1]*q+[2])
....:     Xq = QuiverModuliSpace(Sq, d)
....:     print(Xq.degree(Sq.canonical_stability_parameter(d)))
....: 
5
154
13005
2189726
620169186
\end{lstlisting}

\section{Proof of Conjecture 1}\label{appendix:C}

\begin{proposition}
    \label{Proposition:macmahon-conj}
    Let \(q\geq 1\) be an odd integer. Then we have
    \[
    D(q, 1) = \sum_{i = 0}^{\frac{q-1}{2}}(-1)^{\frac{q-1}{2}-i}\binom{q}{\frac{q-1}{2}-i}(2i+1)^{q-1}
    \]
\end{proposition}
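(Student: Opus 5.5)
The plan is to turn the formula for \(D(q,1)\) from \zcref{Example:k2q1-deg} into a coefficient extraction in an exponential generating function. Then I would show that the resulting Laurent polynomial in \(x=e^{t}\) agrees with the one on the right-hand side of the statement, up to terms with no \(t^{q-1}\) coefficient. Write \(n=\frac{q-1}{2}\), so \(q=2n+1\).

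\textbf{Step 1: the inner sums as a coefficient.} Since \(\sum_{r\ge 0}\begin{Bmatrix} r\\ j\end{Bmatrix}j!\,\frac{t^r}{r!}=(e^t-1)^j\), the sum \(E(j_r,j_s,q-1)\) is a binomial convolution with the sign \((-1)^s\) absorbed by \(t\mapsto -t\). Hence
\[
E(j_r,j_s,q-1)=(q-1)!\,[t^{q-1}]\,(e^t-1)^{j_r}(e^{-t}-1)^{j_s}.
\]
Substituting into \zcref{Example:k2q1-deg} gives \(D(q,1)=(q-1)!\,[t^{q-1}]\,F(e^t)\), where
\[
F(x)=\sum_{0\le j_r,j_s\le n}\binom{q}{j_r,\,j_s,\,q-j_r-j_s}\binom{2n-j_r-j_s}{n-j_r}(x-1)^{j_r}(x^{-1}-1)^{j_s}.
\]
Terms with \(j_r+j_s\ge q\) contribute nothing at order \(t^{q-1}\), since \((e^t-1)^{j_r}(e^{-t}-1)^{j_s}=O(t^{j_r+j_s})\). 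So the range of summation can be adjusted freely above total degree \(q-1\). For the same reason, any Laurent polynomial divisible by \((x-1)^q\) may be discarded. Moreover, because \(q-1\) is even, \(x^{c}\) and \(x^{-c}\) have the same \(t^{q-1}\) coefficient. So I only need to identify \(F\) modulo the subspace \(V\) spanned by \((x-1)^q\mathbb{Q}[x^{\pm1}]\) and the odd parts \(x^c-x^{-c}\).

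\textbf{Step 2: rewrite the target in the same form.} The right-hand side of the statement equals \((q-1)!\,[t^{q-1}]\,G(e^t)\) with
\[
G(x)=\sum_{i=0}^{n}(-1)^{n-i}\binom{q}{n-i}x^{2i+1}.
\]
By \(x^c\equiv x^{-c}\), I can symmetrize \(G\), which brings it close to \(x^{-q}(x^2-1)^q\)-type expressions. One also checks that \(\sum_{k=0}^q(-1)^k\binom{q}{k}x^{q-2k}=x^{-q}(x^2-1)^q\in V\), and this relation is what links the two halves of the symmetrized sum. It therefore suffices to prove \(F\equiv G \pmod V\).

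\textbf{Step 3: evaluate \(F\), which I expect to be the main obstacle.} The idea is to linearize the constrained binomial \(\binom{2n-j_r-j_s}{n-j_r}=[u^{n-j_r}](1+u)^{2n-j_r-j_s}\). The trinomial sum then collapses to the coefficient extraction
\[
F(x)\equiv [u^{n}]\,(1+u)^{2n}\Bigl(1+\tfrac{u(x-1)}{1+u}+\tfrac{x^{-1}-1}{1+u}\Bigr)^{q},
\]
after checking that the truncations \(j_r,j_s\le n\) only discard terms lying in \(V\). The bracket simplifies to \(\frac{ux+x^{-1}}{1+u}\). This gives \(F\equiv [u^n](1+u)^{-1}(ux+x^{-1})^{q}\), i.e.
\[
F\equiv \sum_{k\le n}(-1)^{n-k}\binom{q}{k}x^{2k-q}.
\]
Reindexing with \(k=n-i\) turns this into \(\sum_i(-1)^{i}\binom{q}{n-i}x^{-(2i+1)}\).

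Comparing with \(G\) modulo \(V\), using \(x^{-c}\equiv x^{c}\), should then finish the proof, possibly after fixing an overall sign convention. The delicate points are the bookkeeping of the truncations and the exact sign \((-1)^{n-i}\). I would verify both against the tabulated values \(D(3,1)=6\) and \(D(5,1)=230\) before writing out the general argument.
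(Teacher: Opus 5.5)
Your route is correct, but it differs from the paper's. The paper goes from the MacMahon number back to \(D(q,1)\). It expands \((x-y)^{2n}=\sum_{j_r,j_s}\binom{x}{j_r}\binom{y}{j_s}E(j_r,j_s,2n)\) in the falling-factorial basis and evaluates at the integer points \(x=2n+1-i\), \(y=i\) inside \(\sum_{i=0}^n(-1)^i\binom{2n+1}{i}(2n+1-2i)^{2n}\). It then computes \(\sum_{i=0}^n(-1)^i\binom{2n+1}{i}\binom{2n+1-i}{j_r}\binom{i}{j_s}\) by hand. A case analysis kills the pairs with \(j_r>n\) or \(j_s>n\), and a telescoping partial alternating binomial sum handles \(j_r,j_s\le n\).

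You instead start from \(D(q,1)\) and replace evaluation at integers by the exponential generating function \((e^t-1)^j\). The subspace \(V\) then absorbs every boundary case at once:
\begin{itemize}
\item pairs with \(j_r>n\) die under \([u^n]\);
\item pairs with \(j_s>n\) and \(j_r+j_s\le 2n\) have \([u^{n-j_r}](1+u)^{2n-j_r-j_s}=0\);
\item pairs with \(j_r+j_s=q\) lie in \((x-1)^q\mathbb{Q}[x^{\pm1}]\).
\end{itemize}
The trinomial collapse to \([u^n](1+u)^{-1}(ux+x^{-1})^q\) derives the MacMahon sum rather than verifying it. The factor \((1+u)^{-1}\) is exactly the paper's partial alternating sum in generating-function form. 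The paper's argument stays with finite binomial identities; yours is shorter and explains where the truncation \(i\le n\) comes from. The relation \(x^{-q}(x^2-1)^q\in V\) in your Step 2 is not needed, since \(x^{-c}\equiv x^{c}\) alone finishes the comparison.

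The sign, however, is not a convention you may adjust at the end. Your \(F\) is read off from the last display of \zcref{Example:k2q1-deg}. That display drops the factor \((-1)^{\frac{q-1}{2}}\), which is present in the two preceding formulas for \(D(q,1)\) in the same example and in \zcref{Proposition:k2qm-deg} at \(m=1\). The correct starting point is therefore \(D(q,1)=(-1)^n(q-1)!\,[t^{q-1}]F(e^t)\). Your computation gives \(F\equiv\sum_i(-1)^i\binom{q}{n-i}x^{2i+1}\equiv(-1)^nG \pmod V\), so with the sign restored you obtain exactly the statement. Without it, the same argument would give \(D(3,1)=-6\).
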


\begin{proof}

Writing \(n = \frac{q-1}{2}\), we shall prove that \(D(q, 1)\) is equal to the central MacMahon numbers \(a(n)\coloneqq \sum_{i = 0}^n(-1)^{n-i}\binom{2n+1}{n-i}(2i+1)^{2n}\)(OEIS entry A177043). 

We first introduce some notations. Let \(\binom{x}{j}\) be the polynomial \(\frac{1}{j!}x(x-1)\cdots (x-j+1)\) in a variable \(x\) for \(j\geq 0\). Recall from \cite[Section 6.1]{graham1989concrete} that the generating function for Stirling numbers of the second kind is 

\[
x^k = \sum_{j = 0}^k \begin{Bmatrix}
    k\\ j
\end{Bmatrix}j!\binom{x}{j}, k\geq 0.
\]

Note that the original generating function is written as 

\[
x^k = \sum_{j = 0}^k\begin{Bmatrix}
    k\\ j
\end{Bmatrix}x^{\underline{j}},
\]

where \(x^{\underline{j}}\coloneqq x(x - 1)\cdots (x - j + 1)\). By cancelling the factor \(j!\), we see that the two functions are the same. 

We expand \((x - y)^{2n}\) as follows 

\begin{align*}
    (x - y)^{2n} &= \sum_{r + s = 2n}(-1)^s\binom{2n}{s}x^r y^s\\ 
    &= \sum_{r + s = 2n}(-1)^s\binom{2n}{s}\sum_{j_r = 0}^r\begin{Bmatrix}
        r\\ j_r
    \end{Bmatrix}j_r!\binom{x}{j_r}\sum_{j_s = 0}^s\begin{Bmatrix}
        s\\ j_s
    \end{Bmatrix}j_s!\binom{y}{j_s}.
\end{align*}

For the convenience of changing order of sums, we extend the latter two sums to \(\sum_{j_r = 0}^{2n}\) and \(\sum_{j_s = 0}^{2n}\), which can be done since \(\begin{Bmatrix}
    r\\ j_r
\end{Bmatrix} = \begin{Bmatrix}
    s\\ j_s
\end{Bmatrix} = 0\) for \(j_r > r\) and \(j_s > s\). This gives us 

\begin{align*}
    (x - y)^{2n} &= \sum_{j_r = 0}^{2n}\sum_{j_s = 0}^{2n}\binom{x}{j_r}\binom{y}{j_s}\sum_{r + s = 2n}(-1)^s\binom{2n}{s}\begin{Bmatrix}
        r\\ j_r
    \end{Bmatrix}j_r!\begin{Bmatrix}
        s\\ j_s
    \end{Bmatrix}j_s!\\ 
    &= \sum_{j_r = 0}^{2n}\sum_{j_s = 0}^{2n}\binom{x}{j_r}\binom{y}{j_s}E(j_r, j_s, 2n).
\end{align*}

Substituting \(i\mapsto n-i\), we have 

\[
a(n) = \sum_{i = 0}^n(-1)^i\binom{2n+1}{i}(2n+1-2i)^{2n}. 
\]

Let \(x = 2n+1-i, y = i\). Then we apply \((x - y)^{2n}\) to \(a(n)\) and obtain 

\begin{align*}
    a(n) &= \sum_{i = 0}^n(-1)^i\binom{2n+1}{i}\sum_{j_r = 0}^{2n}\sum_{j_s = 0}^{2n}\binom{2n+1-i}{j_r}\binom{i}{j_s}E(j_r, j_s, 2n)\\ 
    &= \sum_{j_r = 0}^{2n}\sum_{j_s = 0}^{2n}E(j_r, j_s, 2n)\sum_{i = 0}^n(-1)^i\binom{2n+1}{i}\binom{2n+1-i}{j_r}\binom{i}{j_s}. 
\end{align*}

We analyze the index sets in detail. 

If \(j_r \geq n + 1\) and \(j_s \geq n + 1\), each summand in \(E(j_r, j_s, 2n)\) is 0 because any pair \((r, s)\) with \(r + s = 2n\) would result in \(r < j_r\) or \(s < j_s\), which means \(\begin{Bmatrix}
    r\\ j_r
\end{Bmatrix} = 0\) (resp. \(\begin{Bmatrix}
    s\\ j_s
\end{Bmatrix} = 0\)).

If \(j_s\leq n\) and \(j_r\geq n + 1\), we discuss four cases. 

\begin{itemize}
    \item \(j_s = n, j_r > n + 1\): the last sum is \((-1)^n\binom{2n+1}{n}\binom{n+1}{j_r}\), which is 0 since \(\binom{n+1}{j_r} = 0\) for \(j_r > n + 1\);
    \item \(j_s = n, j_r = n + 1\): \(E(j_r, j_s, 2n) = 0\) for the same reason as in the case \(j_r \geq n + 1, j_s \geq n + 1\);
    \item \(j_s < n, j_r\geq 2n + 1 - j_s\): the only possibly non-zero terms in \(E(j_r, j_s, 2n)\) are those with \(s\geq j_s\), which implies \(2n-r\geq j_s\), hence \(r\leq 2n-j_s < 2n + 1 - j_s \leq j_r\). Now that \(r < j_r\), these possibly non-zero terms also vanish because \(\begin{Bmatrix}
        r\\ j_r
    \end{Bmatrix} = 0\).
    \item \(j_s < n, n + 1\leq j_r\leq 2n - j_s\): we compute the last sum as follows: 
    
    \begin{align*}
        &\sum_{i = 0}^n(-1)^i\binom{2n+1}{i}\binom{2n+1-i}{j_r}\binom{i}{j_s}\\ 
        =&\sum_{i = j_s}^{2n + 1 - j_r}(-1)^i\binom{2n+1}{i}\binom{2n+1-i}{j_r}\binom{i}{j_s}\\ 
        =&\sum_{i = j_s}^{2n + 1 - j_r}(-1)^{j_s + i - j_s}\frac{(2n+1)!}{i!(2n+1-i)!}\frac{(2n+1-i)!}{j_r!(2n+1-i-j_r)!}\frac{i!}{j_s!(i-j_s)!}\\ 
        =&\sum_{l = 0}^{2n + 1 - j_r - j_s}(-1)^{j_s}(-1)^l\frac{(2n+1)!}{j_r!j_s!(2n+1-j_r-j_s)!}\frac{(2n+1-j_r-j_s)!}{(i-j_s)!(2n+1-i-j_r)!}\\ 
        =&(-1)^{j_s}\sum_{l = 0}^{2n + 1 - j_r - j_s}(-1)^l\binom{2n+1}{j_r, j_s, 2n+1-j_r-j_s}\binom{2n+1-j_r-j_s}{l}\\ 
        =&(-1)^{j_s}\binom{2n + 1}{j_r, j_s, 2n + 1 - j_r - j_s}\sum_{l = 0}^{2n + 1 - j_r - j_s}(-1)^l\binom{2n + 1 - j_r - j_s}{l}\\ 
        =&(-1)^{j_s}\binom{2n + 1}{j_r, j_s, 2n + 1 - j_r - j_s}(1 - 1)^{2n + 1 - j_r - j_s} = 0.
    \end{align*}

    The reason for the first equality is, \(\binom{i}{j_s} = 0, \text{ for all } i < j_s\) and \(\binom{2n + 1 - i}{j_r} = 0, \text{ for all } i > 2n + 1 - j_r\). Since \(j_r\geq n + 1\), \(2n + 1 - j_r\leq n\) will always hold. So the sum \(\sum_{i = 0}^n\) can be simplified to \(\sum_{i = j_s}^{2n + 1 - j_r}\). 

    The second equality can be expanded as above. Then the last equality is the binomial theorem \((a + b)^k = \sum_{i=0}^k\binom{k}{i}a^{n-i}b^i\) with \(a=1, b=-1, k=2n+1-j_r-j_s\). 
\end{itemize}

Then the dual case \(j_r\leq n, j_s\geq n + 1\) is analogous to the above discussion. Therefore, we are left with the following sum. 

\[
a(n) = \sum_{j_r = 0}^{n}\sum_{j_s = 0}^{n}E(j_r, j_s, 2n)\sum_{i = 0}^n(-1)^i\binom{2n+1}{i}\binom{2n+1-i}{j_r}\binom{i}{j_s}.
\]

For any \(0\leq j_r\leq n, 0\leq j_s\leq n\), and \(0\leq i\leq n\), the summand \(\binom{2n+1}{i}\binom{2n+1-i}{j_r}\binom{i}{j_s}\) does not vanish. Thus, the last sum is computed as 

\begin{align*}
    &\sum_{i = 0}^n(-1)^i\binom{2n+1}{i}\binom{2n+1-i}{j_r}\binom{i}{j_s}\\ 
    =&\binom{2n + 1}{j_r, j_s, 2n + 1 - j_r - j_s}\sum_{i = j_s}^n(-1)^i\binom{2n + 1 - j_r - j_s}{i - j_s}\\ 
    =&\binom{2n + 1}{j_r, j_s, 2n + 1 - j_r - j_s}(-1)^n\binom{2n - j_r - j_s}{n - j_r}.
\end{align*}

The first equality uses the same expansion as in the fourth bullet point in the above discussion, and the second equality can be obtained by the following successive application of the Pascal's identity. 

\begin{align*}
    &\sum_{i = j_s}^n(-1)^i\binom{2n + 1 - j_r - j_s}{i - j_s}\\
    =&(-1)^{j_s}\sum_{l = 0}^{n - j_s}(-1)^l\binom{2n + 1 - j_r - j_s}{l}\\ 
    =&(-1)^{j_s}\sum_{l = 0}^{n - j_s}(-1)^l\left[\binom{2n - j_r - j_s}{l} + \binom{2n - j_r - j_s}{l-1}\right]\\ 
    =&(-1)^{j_s}\left[\binom{2n - j_r - j_s}{-1} + \binom{2n - j_r - j_s}{0} - \binom{2n - j_r - j_s}{0} - \binom{2n - j_r - j_s}{1}\right. \\ 
    &\left.+ \cdots + (-1)^{n-j_s}\binom{2n - j_r - j_s}{n - j_s - 1} + (-1)^{n - j_s}\binom{2n - j_r - j_s}{n - j_s}\right]\\ 
    =&(-1)^{j_s}\left[\binom{2n - j_r - j_s}{-1} + (-1)^{n - j_s}\binom{2n - j_r - j_s}{n - j_s}\right]\\ 
    =&(-1)^n\binom{2n - j_r - j_s}{n - j_s}.
\end{align*}

The last equality is due to the fact that \(\binom{2n - j_r - j_s}{-1} = 0, \text{ for all } 2n - j_r - j_s\geq 0\). For a more rigorous treatment of binomial coefficients with negative arguments, we refer to \cite{graham1989concrete}. 

Plugging it back into \(a(n)\), we get precisely \(D(q, 1)\). 
\end{proof}

\section{AI and computation resource disclosure}\label{appendix:D}

Computations of invariants of quiver moduli in this paper are verified using \cite{quivertools}. The proof of \zcref{Proposition:macmahon-conj} is achieved with the help of a LLM by DeepMind and verified in Lean 4. The author takes full responsibility for correctness of the proof.

\printbibliography[heading=bibintoc,title={References}]

\end{document}